\newtheorem{lem}{Lemma}
\newtheorem{cor}[lem]{Corollary}
\newtheorem{thm}[lem]{Theorem}
\newtheorem{conj}{Conjecture}[section]
\numberwithin{equation}{section}
\newcommand{\ben}{\begin{enumerate}}
\newcommand{\een}{\end{enumerate}}
\newcommand{\bfi}{\begin{figure} \begin{center}}
\newcommand{\efi}{\end{center} \end{figure}}
\newcommand{\capt}{\caption}
\newcommand{\hs}[1]{\hspace{#1}}
\newcommand{\da}{\hs{-2pt}\downarrow}
\author{Kenneth Barrese}
\title{A Graph Theory of Rook Placements}
\begin{document}
\maketitle

\begin{abstract}

Two boards are rook equivalent if they have the same number of non-attacking rook placements for any number of rooks. Define a rook equivalence graph on an equivalence class of Ferrers boards by specifying that two boards are connected by an edge if you can obtain one of the boards by moving squares in the other board out of one column and into a single other column. Given such a graph, we characterize which boards will yield connected graphs. We also provide some cases where common graphs will or will not be the graph for some set of rook equivalent Ferrers boards. Finally, we extend this graph definition to the $m$-level rook placement generalization developed by Briggs and Remmel. This yields a graph on the set of rook equivalent singleton boards, and we characterize which singleton boards give rise to a connected graph.

\end{abstract}

%


\section{Introduction}
\label{sec:in}

Broadly speaking, rook theory is the study of how many ways a number of rooks may be placed on a board, which is a collection of square cells grouped into a grid of rows and columns. The formal study of rook theory began with Kaplansky and Riordan~\cite{kr:pra}, who connected rook placements with elements of the symmetric groups, $S_n$, and used rook placements to study permutations with various restrictions.

Two boards are equivalent if there are the same number of ways to place $k$ rooks on both boards, for any non-negative integer $k$. The number of ways to place $k$ rooks on a board is also called the $k$-th rook number of the board. Foata and Sch\"{u}tzenberger~\cite{fs:rpf} demonstrated that for Ferrers boards, a convenient subset of boards, any rook equivalence class would contain a unique element of a specified type. They also defined explicit bijections between rook placements of $k$ rooks on any two rook equivalent Ferrers boards. Their result was primarily geometric, and involved transposing specified subboards of a Ferrers board.

Goldman, Joichi, and White~\cite{gjw:rtI} connected the results of Foata and Sch\"{u}tzenberger to algebraic combinatorics. They did so by slightly adjusting the existing definition of the rook polynomial, a generating function for the rook numbers. By redefining it to be in the falling factorial basis of polynomials, rather than the standard basis, they were able to factor the rook polynomial of a Ferrers board. This led them to a much simpler proof that Foata and Sch\"{u}tzenberger's unique element existed in any equivalence class, as well as allowing them to enumerate the size of the rook equivalence classes of Ferrers boards.

Briggs and Remmel~\cite{br:mrn} gave a generalization of rook placements by associating sets of $m$ consecutive rows into a single level. By doing so, they created rook placements that correspond to the wreath product $C_m\wr S_n$, where $C_m$ is the cyclic group with $m$ elements, in the way that ordinary rook placements correspond to $S_n$. Working on a subset of Ferrers boards that behave nicely with their $m$-level rook placements, called singleton boards, Briggs and Remmel were able to extend the factorization theorem of Goldman, Joichi, and White to their generalization.

Through a collaboration between Loehr, Remmel, Sagan, and myself~\cite{blrs:mrp}, we factored the $m$-level rook polynomial it a couple different ways. One method considered the heights of the different columns, similar to what Goldman, Joichi, and White and Briggs and Remmel have done. The other method focused on the number of cells in each level. In a second paper~\cite{blrs:bom} we were able to extend the bijections that Foata and Sch\"{u}tzenberger developed to $m$-level rook equivalence classes of singleton boards.

In this paper we connect rook theory and graph theory by defining a graph whose vertices are the Ferrers boards in a rook equivalence class, and whose edges are simple geometric alterations that transform one board to another in the equivalence class. In order to keep this paper self contained, Section~\ref{sec:gt} provides a very brief introduction to some definitions and concepts from graph theory. In Section~\ref{sec:rp} we provide a look at the background from rook theory. In Section~\ref{sec:gtrp} we define the rook equivalence graph, and work towards a criterion for whether this graph is connected or disconnected. The focus of Section~\ref{sec:commongraphs} is demonstrating that the complete graphs are obtained as the rook equivalence graphs for certain Ferrers boards while some small bipartite graphs do not exist as rook equivalence graphs.

We move to the $m$-level rook placement generalization in Section~\ref{sec:mrp}. This facilitates a generalized $m$-level rook equivalence graph introduced in Section~\ref{sec:mgtrp}, which once again concludes with a criterion for when the $m$-level rook equivalence graph is connected. Finally, in Section~\ref{sec:fproj} we provide some open conjectures building upon the new definition of rook equivalence graphs in this paper. The first conjecture concerns the non-existence of bipartite rook equivalence graphs, motivated by the results in Section~\ref{sec:commongraphs}. The second conjecture hypothesizes the existence of explicit bijections, in the spirit of Foata and Sch\"{u}tzenberger~\cite{fs:rpf}, induced by the edges of the rook equivalence graph.

\section{Introduction to Graph Theory}
\label{sec:gt}

Presented in this section is a short introduction to the concepts of graph theory needed to keep this paper internally complete. Anyone already familiar with graph theory can probably skip this section.

A \emph{graph} is a pair of sets, a set of vertices $V = \{v_1,\ldots,v_n\}$ and a set of edges $E =\{e_1,\ldots,v_M\}$, where each edge is an unordered pair of distinct vertices, $e = \{v_i,v_j\}$. Visually, graphs are usually represented by assigning vertices to unique points in space, and then drawing a line between vertices $v_i$ and $v_j$ if and only if $e = \{v_i,v_j\}$ is an element of the edge set. A graph is called \emph{connected} if, given any two vertices $v_i,v_j \in V$, there exists a sequence of edges $e_1,e_2,\ldots,e_p\in E$ such that $v_i \in e_1$, $v_j \in e_p$ and $e_n \cap e_{n+1} \neq \varnothing$ for all $1\leq n < p$. Figure~\ref{graphex} contains three examples of graphs, the first is not connected, but the other two are.

\bfi
\resizebox{100pt}{!}{
\begin{tikzpicture}
  [scale=.8,auto=left,every node/.style={circle,draw=black}]
  \node (n4) at (0,4)  {4};
  \node (n5) at (4,4)  {5};
  \node (n1) at (-2,0) {1};
  \node (n2) at (4,0)  {3};
  \node (n3) at (0,0)  {2};

  \foreach \from/\to in {n4/n2,n2/n5,n2/n3,n3/n4}
    \draw (\from) -- (\to);

\end{tikzpicture}
}
\hspace{30pt}
\resizebox{100pt}{!}{
\begin{tikzpicture}
  \graph[circular placement, radius=3cm,
         empty nodes, nodes={circle,draw=black}] {
    \foreach \x in {a,...,f} {
      \foreach \y in {\x,...,f} {
        \x -- \y;
      };
    };
  };
\end{tikzpicture}
}
\hs{30pt}
\resizebox{100pt}{!}{
\begin{tikzpicture}
  [scale=.8,auto=left,every node/.style={circle,draw=black}]
  \node (n3) at (0,4)  {3};
  \node (n5) at (4,4)  {b};
  \node (n4) at (4,0)  {a};
  \node (n1) at (0,0)  {1};
  \node (n2) at (0,2) {2};

  \foreach \from/\to in {n4/n2,n4/n1,n1/n5,n2/n5,n5/n3,n3/n4}
    \draw (\from) -- (\to);

\end{tikzpicture}
}
\capt{\label{graphex} On the left, a graph with five vertices and six edges. In the middle, $K_6$, the complete graph on six vertices. On the right, $K_{3,2}$, a complete bipartite graph.}
\efi

Some graphs of particular note are the complete graphs and the complete bipartite graphs. A \emph{complete graph} on $n$ vertices, denoted $K_n$, has a vertex set of size $n$ and edge set of size ${n \choose 2}$, requiring that every vertex be connected to every other vertex. A graph is \emph{bipartite} if its vertex set can be decomposed into two disjoint subsets $V = V_1 \sqcup V_2$ such that every edge in the edge set contains one vertex from $V_1$ and one vertex from $V_2$. A \emph{complete bipartite graph}, denoted $K_{n,m}$, is a graph where $V = V_1 \sqcup V_2$ such that $|V_1| = n$, $|V_2|=m$, and the edge set is of size $n\cdot m$, containing every possible edge with one vertex from $V_1$ and one vertex from $V_2$. The middle graph of Figure~\ref{graphex} is a complete graph and the right graph is a complete bipartite graphs.

Two graphs, $G_1=\{V_1,E_1\}$ and $G_2 = \{V_2,E_2\}$ are called \emph{isomorphic} if there is a bijection $f$ between $V_1$ and $V_2$ such that $\{v_i,v_j\} \in E_1$ if and only if $\{f(v_i),f(v_j)\} \in E_2$. In other words, two graphs are isomorphic if they have the same number of vertices and their vertices are connected in the same way by edges.

\section{Rook Placements}
\label{sec:rp}

In this section we provide an introduction to the definitions and concepts central to rook theory. There is a decided emphasis on the algebraic methods pioneered by Goldman, Joichi, and White, but they should also be understood in the context of the more geometric reasoning of Foata and Sch\"{u}tzenberger, and the geometric interpretations which they have in their own right.

Begin by tiling the first quadrant with $1$ by $1$ square cells. A \emph{board} is any finite subset of this tiling. Like Foata and Sch\"{u}tzenberger we will restrict our attention to a convenient subset of boards, called Ferrers boards. A \emph{Ferrers board} is a board consisting of connected columns, each beginning in the bottom row of the first quadrant, where the column heights are non-decreasing from left to right. Consider Figure~\ref{Ferrers} for an example of a Ferrers board. We index Ferrers boards by their column heights, so the Ferrers board in Figure~\ref{Ferrers} is $B=(1,2,2,4)$. Note that the shape of a Ferrers board is unchanged if we add columns of height zero to the left side of the board, for example, $B=(1,2,2,4) = (0,0,1,2,2,4)$. Henceforth, any board we refer to should be assumed to be a Ferrers board and we will only specify Ferrers boards in Theorem statements as a reminder of this condition.

\bfi
\begin{tikzpicture}
\draw(-.5,1) node {$B=$};
\foreach \x in {0} 
   \draw (\x,0)--(\x,.5);
\foreach \x in {.5,1} 
   \draw (\x,0)--(\x,1);
\foreach \x in {1.5,2}
   \draw (\x,0)--(\x,2);
\foreach \y in {0,.5}
	\draw (0,\y)--(2,\y);
\foreach \y in {1}
        \draw (.5,\y)--(2,\y);
\foreach \y in {1.5,2}
          \draw(1.5,\y)--(2,\y);
\draw (.75, .25) node{$R$};
\draw (1.75, 1.25) node{$R$};
\end{tikzpicture}
\capt{\label{Ferrers} The Ferrers board $B= (1,2,2,4)$}
\efi

Given a board, a \emph{rook placement of $k$ rooks} on that board is a selection of $k$ squares from that board, so that no two squares are in the same row or column. This corresponds to putting $k$ rook chess pieces, on the board so that no two rooks are attacking each other, since each rook attacks its own row and its own column. Figure~\ref{Ferrers} contains an example of a rook placement of $2$ rooks on the board. The rooks are in squares $(2,1)$ and $(4,3)$; note that we are using Cartesian coordinates to label the squares.

The \emph{$k$th rook number} of a board $B$ is the number of rook placements of $k$ rooks on that board and is denoted $r_k(B)$. For any board, $r_0(B) = 1$ and $r_1(B) = |B|$, the number of squares that $B$ contains. We call two boards \emph{rook equivalent} if they have the same rook numbers for all non-negative integers $n$, we denote this $B_1\equiv B_2$. In order to determine if two boards are rook equivalent, we define a  generating function for the rook numbers of a board. However, we do so in the falling factorial basis for the vector space of polynomials, which we define as follows. Given a non-negative integer $n$, we define \emph{$k$th falling factorial} of $x$ to be $$x\da_k = x(x-1)\cdots(x-(k-1)).$$ As with the traditional factorial, $x\da_0 = 1$. Thus $\{x\da_k\mid k\geq0\}$ is a basis for the vector space of polynomials.

Having the falling factorial basis, we can define the rook polynomial of a board. Given a board $B=(b_1,b_2,\ldots,b_n)$ with $n$ columns, the \emph{rook polynomial} of $B$ is $$p(B,x) = \sum_{k=0}^nr_k(B)x\da_{n-k}.$$ Notice first that this is the generating function for the rook numbers in the falling factorial basis. Notice also that, while adding columns of height zero to the left of the board $B$ will not change the geometry of $B$, and thus not affect the rook numbers of the board, it will affect the rook polynomial of the board. This is because the polynomial is indexed from zero to the number of columns of $B$. Therefore, we can conclude that two boards are rook equivalent if and only if they have the same rook polynomial as long as we pad the boards with columns of height zero until they both have the same number of columns.

Goldman, Joichi, and White obtained a beautiful theorem factoring the rook polynomial of $B$ in terms of the column heights of $B$ as follows:
\begin{thm}[Goldman-Joichi-White~\cite{gjw:rtI}]
\label{gjw1}
If $B=(b_1,\dots,b_n)$ is a Ferrers board, then
$$
\sum_{k=0}^n r_{k}(B) x\da_{n-k} = \prod_{i=1}^n (x+b_i-(i-1)).
$$
\end{thm}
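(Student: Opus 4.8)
The plan is to prove the identity by a double-counting argument on an augmented board, followed by a polynomial-degree argument. First I would fix a non-negative integer value of $x$ and build the augmented board obtained by adjoining beneath $B$ a solid rectangle of $x$ rows and $n$ columns, so that the $i$th column of the new board has height $b_i + x$ and spans every one of the $x$ added rows. Because this board has exactly $n$ columns, I will count the number of placements of $n$ non-attacking rooks on it; in any such placement every column must carry exactly one rook, and no two rooks share a row.

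The first count proceeds column by column from left to right and yields the right-hand side. After rooks have been committed to columns $1,\ldots,i-1$ they occupy $i-1$ distinct rows, and here the Ferrers condition is essential: since the heights are non-decreasing and every column starts in the bottom row, all $i-1$ of those occupied rows lie within the range of column $i$. Hence column $i$ offers exactly $(b_i + x) - (i-1)$ admissible rows, and multiplying over $i$ gives $\prod_{i=1}^n (x + b_i - (i-1))$.

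The second count is obtained by splitting a placement according to how many rooks, say $k$, land in the original part $B$ versus the adjoined rectangle. There are $r_k(B)$ ways to place the $k$ rooks inside $B$; these use $k$ columns and $k$ rows of $B$. The remaining $n-k$ rooks must be placed in the rectangle, one in each of the remaining $n-k$ columns, with distinct rows chosen among the $x$ rectangle rows. Since the rectangle is solid, these rows are freely and independently available and are disjoint from the rows used inside $B$, giving $x\da_{n-k}$ choices. Summing over $k$ produces $\sum_{k=0}^n r_k(B)\, x\da_{n-k}$, the left-hand side.

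Equating the two counts establishes the identity for every non-negative integer $x$. The final step is to promote this to an identity of polynomials: both sides are polynomials in $x$ of degree $n$, and two such polynomials agreeing at infinitely many points must be identical. The point requiring the most care is the first count, where one must verify that the non-decreasing column heights guarantee that the previously occupied rows always remain available in the current column; this is precisely where the Ferrers hypothesis enters and where the clean factorization originates.
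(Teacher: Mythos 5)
Your proposal is correct, and it is precisely the classical Goldman--Joichi--White argument: augment $B$ with an $x\times n$ rectangle below, count placements of $n$ non-attacking rooks on the augmented board column-by-column (using the Ferrers condition) and again by splitting rooks between $B$ and the rectangle, then conclude by comparing polynomials agreeing at infinitely many values. The paper itself states this theorem without proof, citing~\cite{gjw:rtI}, and your argument is essentially the one given in that reference, so there is nothing to flag.
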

In particular, we know all $n$ roots of $p(B,x)$. This motivates us to define the \emph{root vector} of $B$ as a vector containing all the roots ordered so the $i$th entry of the root vector is the root that comes from the $i$th column of $B$. In other words, if $B= (b_1,b_2,\ldots,b_n)$ then the root vector of $B$ is $\xi(B)=\langle 0-b_1,1-b_2,\ldots,(n-1)-b_n\rangle $. For the board $B=(1,2,2,4)$ in Figure~\ref{Ferrers}, $\xi(B) = \langle0-1,1-2,2-2,3-4\rangle = \langle-1,-1,0,-1\rangle$. The root vector gives another characterization of rook equivalence.

\begin{cor}[Goldman-Joichi-White~\cite{gjw:rtI}]
\label{rearrange}
For two Ferrers boards $B_1$ and $B_2$, $B_1 \equiv B_2$ if and only if $\xi(B_2)$ is a rearrangement of the elements of $\xi(B_1)$.
\end{cor}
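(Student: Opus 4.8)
The plan is to read the corollary off the Goldman--Joichi--White factorization (Theorem~\ref{gjw1}) together with the reduction already recorded above, namely that $B_1\equiv B_2$ is equivalent to equality of the rook polynomials $p(B_1,x)$ and $p(B_2,x)$ once the two boards have been padded with columns of height zero to a common number of columns $N$. So I would first fix such an $N$, pad both boards to have exactly $N$ columns, and thereafter treat $p(B_1,x)$ and $p(B_2,x)$ as polynomials of the same degree~$N$.

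The central step is to rephrase Theorem~\ref{gjw1} in terms of the root vector. For a board $B=(b_1,\ldots,b_N)$ the $i$th factor $(x+b_i-(i-1))$ vanishes exactly when $x=(i-1)-b_i$, which is precisely the $i$th entry of $\xi(B)$. Hence the factorization can be rewritten as
$$
p(B,x)=\prod_{i=1}^N\bigl(x-\xi(B)_i\bigr),
$$
a monic polynomial of degree $N$ whose multiset of roots, counted with multiplicity, is exactly the multiset of entries of $\xi(B)$.

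I would then invoke unique factorization: since each rook polynomial splits completely into linear factors over the integers, two such monic polynomials of the same degree are equal if and only if they have the same multiset of roots. Applying this to $p(B_1,x)$ and $p(B_2,x)$, equality of the two rook polynomials is equivalent to equality of the two multisets $\{\xi(B_1)_i\}$ and $\{\xi(B_2)_i\}$, and equality of the multisets of two length-$N$ vectors is exactly the assertion that one vector is a rearrangement of the other. Chaining the equivalences---$B_1\equiv B_2$ iff the padded rook polynomials agree, iff the root multisets agree, iff the root vectors are rearrangements---establishes both directions of the corollary at once.

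The one point requiring care, and where I expect to have to be explicit, is the padding. Adding a column of height zero on the left shifts the indices of all the existing columns, so it does not merely append an entry to the root vector but raises every existing entry; consequently the rearrangement comparison must be performed only after both boards have been brought to the same number of columns $N$, and it is these length-$N$ root vectors to which the statement refers. Once that bookkeeping is made precise, the remainder of the argument is purely the uniqueness-of-factorization observation above.
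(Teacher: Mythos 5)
Your proof is correct and is exactly the argument the paper intends: the corollary is stated as an immediate consequence of Theorem~\ref{gjw1}, read off by noting that the factorization exhibits the entries of $\xi(B)$ as the multiset of roots of the (padded) rook polynomial, so equality of rook polynomials is equality of root multisets. Your explicit handling of the zero-column padding (which shifts every existing entry rather than merely appending a $0$) is a worthwhile precision, but the route is the same.
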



Given a root vector, we can reconstruct the board $B$ for which it is $\xi(B)$ since the $i$th entry is $\xi_i = (i-1) - b_i$. Goldman, Joichi, and White use this fact to give a root vector proof of a theorem previously proved by Foata and Sch\"{u}tzenberger.

\begin{thm}[Foata-Sch\"{u}tzenberger~\cite{fs:rpf}]
\label{unique}
Every Ferrers board is rook equivalent to a unique Ferrers board with strictly increasing column heights.
\end{thm}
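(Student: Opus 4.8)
The plan is to translate everything into root vectors and appeal to Corollary~\ref{rearrange}, turning the geometric claim into a statement about multisets of integers. First I would record the dictionary between the column condition and the root vector. For a Ferrers board $B=(b_1,\dots,b_n)$ we have $\xi_i=(i-1)-b_i$, hence $\xi_{i+1}-\xi_i=1-(b_{i+1}-b_i)$; since the heights are non-decreasing integers, $B$ has strictly increasing columns precisely when $\xi_{i+1}\le\xi_i$ for all $i$, that is, when the root vector is weakly decreasing. So a strictly increasing board is exactly a weakly decreasing arrangement of its root multiset, and a given multiset has only one such arrangement.

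For uniqueness I would first note that the number of columns of a strictly increasing board (written without zero columns) is a rook invariant. If $1\le b_1<\dots<b_n$ then $b_i\ge i$, so placing the $i$th rook in row $i$ gives a full placement of $n$ non-attacking rooks, and no more than $n$ rooks fit in $n$ columns; thus $n=\max\{k:r_k(B)\ne 0\}$. Two rook equivalent strictly increasing boards therefore have the same number of columns, hence by Corollary~\ref{rearrange} the same root multiset, and by the previous paragraph they must both equal the unique weakly decreasing arrangement of it. So they coincide.

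For existence the natural candidate is to sort the root multiset in weakly decreasing order and reconstruct heights by $b_i=(i-1)-\xi_i$. This arrangement is weakly decreasing, so by the dictionary its heights are strictly increasing, and by Corollary~\ref{rearrange} the result is rook equivalent to $B$ --- provided it is an honest Ferrers board. This proviso is the main obstacle, and the difficulty is real: padding with zero columns injects spurious non-negative roots. For example $(1,1,1)\equiv(3)$, but the three-column root multiset $\{-1,0,1\}$ sorts to $(1,0,-1)$, which would demand $b_1=-1$, whereas the honest representative is the one-column board $(3)$.

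I would therefore reconstruct at the invariant column count $m=\max\{k:r_k(B)\ne 0\}$. Padding $B$ to $N\ge m$ columns and invoking Theorem~\ref{gjw1}, the rook polynomial factors as $P_N(x)=x\da_{N-m}\,Q(x)$, so its roots split into the $N-m$ padding roots $0,1,\dots,N-m-1$ coming from $x\da_{N-m}$ and $m$ genuine roots coming from $Q$; discarding the former and shifting the latter down by $N-m$ produces the size-$m$ multiset of roots of $\tilde Q(y)=\sum_{k=0}^m r_k\,y\da_{m-k}$. The crux is to show these are all negative. This follows from positivity of rook numbers and falling factorials: for any integer $j\ge 0$ one has $j\da_{m-k}\ge 0$, so $\tilde Q(j)=\sum_{k=0}^m r_k\,j\da_{m-k}\ge r_m\,j\da_0=r_m>0$, whence $\tilde Q$ has no non-negative integer root. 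Thus every genuine root is $\le -1$, the weakly decreasing reconstruction satisfies $b_i=(i-1)-\xi_i\ge i\ge 1$, and it is a strictly increasing Ferrers board rook equivalent to $B$. Establishing this positivity at the correct column count is where the real work lies; the remainder is bookkeeping with the factorization of Theorem~\ref{gjw1}.
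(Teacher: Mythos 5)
Your proposal is correct, but it takes a genuinely different route from the proof the paper sketches (which is Goldman--Joichi--White's root-vector argument). The paper pads the board with columns of height zero until all root-vector entries are non-negative, and characterizes the strictly increasing representative by the shape of its \emph{padded} root vector: it increases $0,1,\ldots,M$ up to the maximum and is thereafter weakly decreasing; existence and uniqueness then reduce to the observation that a valid root-vector multiset admits exactly one arrangement of this shape. You instead work at the unpadded column count, which forces you to supply two ingredients the padded formulation gets for free: (a) that the number of columns of a strictly increasing board is the rook invariant $\max\{k : r_k(B)\neq 0\}$, which you prove with the diagonal placement using $b_i\ge i$, and (b) that the $m$ ``genuine'' roots are all negative, which you prove by factoring $x\da_{N-m}$ out of the padded rook polynomial and evaluating $\tilde Q$ at non-negative integers, where positivity of rook numbers gives $\tilde Q(j)\ge r_m>0$. (In passing from ``$\tilde Q$ has no non-negative integer root'' to ``every root is $\le -1$'' you implicitly use that the roots of $\tilde Q$ are integers; this does follow from Theorem~\ref{gjw1} applied to the padded board, since the roots of $Q$ are among the roots of $P_N$, but it is worth saying explicitly.) As for what each approach buys: the paper's canonical ``increase to $M$, then weakly decrease'' form is exactly the hub vertex used later in the connectivity arguments (Lemmas~\ref{decreasing} and~\ref{maxleft}, Theorem~\ref{connected}), so it integrates with the rest of the paper; your version is more self-contained about why the reconstruction yields an honest Ferrers board with positive, strictly increasing heights --- a point your $(1,1,1)\equiv(3)$ example shows is not automatic --- and it never needs the root-vector criterion of Theorem~\ref{gjw2}, only the factorization of Theorem~\ref{gjw1} and Corollary~\ref{rearrange}.
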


The proof that Goldman, Joichi, and White gave for this result relied on the fact that a strictly increasing board will have a root vector which begins by increasing from zero to its maximum value and thereafter weakly decreases. The increasing beginning corresponds to columns of height zero on the left side. The entries past the first element of maximum value correspond to non-empty columns. Remember that the $i$th entry of the root vector is $(i-1)-b_i$. If $\xi_{i+1} \leq \xi_i$, that means $b_i +1 \leq b_{i+1}$, which implies that the column heights are strictly increasing.

 However, not all rearrangements of $\xi(B)$ will necessarily correspond to Ferrers boards. Goldman, Joichi, and White give a criterion for when a vector is the root vector of some board.
\begin{thm}[\cite{gjw:rtI}]
\label{gjw2}
A vector $\xi = \langle\xi_1,\xi_2,\ldots,\xi_n\rangle$ is the root vector of some Ferrers board if and only if
\ben
   \item[(i)] $\xi_1 \leq 0$
   \item[(ii)] $\xi_i +1 \geq \xi_{i+1} $
\een
\end{thm}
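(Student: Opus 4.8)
The plan is to prove both implications by translating the defining conditions of a Ferrers board directly into conditions on the root vector, using the dictionary supplied just before the theorem: the $i$th entry of a root vector is $\xi_i = (i-1) - b_i$, equivalently $b_i = (i-1) - \xi_i$. Recall that a Ferrers board $B = (b_1,\ldots,b_n)$ is exactly a sequence of non-negative integers that is weakly increasing, $0 \le b_1 \le b_2 \le \cdots \le b_n$. Since the correspondence $\xi_i \leftrightarrow b_i$ is a bijection on integer vectors, the whole theorem reduces to checking that conditions (i) and (ii) on $\xi$ are equivalent to non-negativity and monotonicity of the $b_i$. (Throughout I assume, as is implicit in the notion of a root vector, that the entries of $\xi$ are integers; this is what forces each $b_i = (i-1) - \xi_i$ to be an integer.)

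For the forward direction, I would assume $\xi = \xi(B)$ for a Ferrers board $B$ and read off (i) and (ii). Condition (i) is immediate: $\xi_1 = 0 - b_1 = -b_1 \le 0$ because $b_1 \ge 0$. For condition (ii), the monotonicity $b_i \le b_{i+1}$ rearranges directly: writing $b_i = (i-1) - \xi_i$ and $b_{i+1} = i - \xi_{i+1}$, the inequality $b_i \le b_{i+1}$ becomes $(i-1) - \xi_i \le i - \xi_{i+1}$, which simplifies to $\xi_{i+1} \le \xi_i + 1$, i.e.\ (ii).

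For the reverse direction, I would start from a vector $\xi$ satisfying (i) and (ii) and define $b_i := (i-1) - \xi_i$, then verify that $(b_1,\ldots,b_n)$ is a genuine Ferrers board whose root vector is $\xi$. That the root vector of this board is $\xi$ is automatic from the definition, and the monotonicity $b_i \le b_{i+1}$ is just the reverse of the computation above, coming from (ii). The one step requiring an argument beyond a one-line rearrangement is non-negativity of every column: the hypotheses only hand us $\xi_1 \le 0$ directly, so I must propagate this bound along the whole vector. Iterating (ii) gives the telescoped inequality $\xi_i \le \xi_1 + (i-1)$, and combining this with (i) yields $\xi_i \le i-1$, hence $b_i = (i-1) - \xi_i \ge 0$ for every $i$. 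This telescoping of (ii) against the single anchor (i) is the crux of the proof; everything else is bookkeeping in the $\xi_i \leftrightarrow b_i$ dictionary.
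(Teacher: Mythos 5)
Your proof is correct and complete. One thing to be aware of: the paper itself offers no proof of this statement to compare against --- it is imported from Goldman, Joichi, and White~\cite{gjw:rtI}, and the surrounding text only records consequences of it (e.g.\ that positive values cannot be ``jumped over''). So your argument fills a gap the paper leaves to the literature, and it is the standard one: the dictionary $\xi_i = (i-1)-b_i$ is a bijection on integer vectors, condition (i) corresponds to $b_1 \geq 0$, and condition (ii) corresponds to weak monotonicity $b_i \leq b_{i+1}$. A minor streamlining of your reverse direction: once (ii) has given you $b_i \leq b_{i+1}$ for all $i$, you do not need to telescope (ii) separately to get non-negativity, since (i) gives $b_1 = -\xi_1 \geq 0$ and then $b_i \geq b_{i-1} \geq \cdots \geq b_1 \geq 0$; this chain is, of course, your telescoping inequality in disguise, so the ``crux'' you identify is really the same bookkeeping as the rest.
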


Theorem~\ref{gjw2} is of great importance is Section~\ref{sec:gtrp}, so let us consider some consequences of it. Firstly, if a rook vector consists only of non-negative entries, then condition (i) requires that the first entry must have value $0$. The second condition specifies that an entry in the root vector never increases by more than $1$ over the value directly to its left. This implies that the leftmost occurrence of any positive value in the root vector must be proceeded by an entry of value exactly one less, since there is no way to ``jump over'' any positive value while only increasing the value of elements by $1$ going from left to right.

By counting the number of ways to reorder $\xi(B)$ so that the resulting vector is still the root vector for a board, Goldman, Joichi, and White were able to enumerate the set of boards rook equivalent to $B$. In order to count reordering, given a root vector $\xi(B)$ of a board with $n$ columns, let us define non-negative integers $$v_i =\left |\{k\mid1\leq k \leq n \text{ and } \xi(B)_k =i\}\right |.$$ In other words, $v_i$ is the number of times $i$ occurs in $\xi(B)$.
\begin{thm}[Goldman-Joichi-White~\cite{gjw:rtI}]
\label{gjw3}
Let $B=(b_1,\ldots,b_n)$ be a Ferrers board padded with sufficient columns of height zero on the left so that all entries of $\xi(B)$ are non-negative. The number of Ferrers boards rook equivalent to $B$ is: $$\prod_{i\geq0}{v_i+v_{i+1}-1 \choose v_{i+1}}.$$
\end{thm}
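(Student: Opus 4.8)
The plan is to translate the problem into a purely combinatorial count on the multiset of root-vector values and then evaluate that count by peeling off the largest value. By Corollary~\ref{rearrange}, the boards rook equivalent to $B$ correspond bijectively to the rearrangements of the multiset of entries of $\xi(B)$ that are themselves root vectors of Ferrers boards, and by Theorem~\ref{gjw2} these are exactly the sequences $\langle\xi_1,\dots,\xi_n\rangle$ using $v_i$ copies of each $i\ge 0$ subject to $\xi_1\le 0$ and $\xi_{j+1}\le\xi_j+1$. Since every entry is non-negative, the first condition forces $\xi_1=0$, so it suffices to count these admissible sequences.

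First I would record the structural fact already isolated in the discussion following Theorem~\ref{gjw2}: because a value can increase by at most $1$ from one position to the next, the first time the sequence attains any positive value $j$ it must do so immediately after an entry equal to $j-1$. Let $M$ be the largest value occurring. Applying this observation to $M$ shows that every maximal run of $M$'s is immediately preceded by an entry equal to $M-1$ (a run cannot start the sequence, as $\xi_1=0<M$), so each of the $v_M$ copies of $M$ sits in a block anchored to one of the $v_{M-1}$ copies of $M-1$.

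The main step is then to turn this anchoring into a bijection. Deleting all copies of $M$ from an admissible sequence yields an admissible sequence on the values $0,\dots,M-1$ (one checks that removing a run $\dots,(M-1),M,\dots,M,b,\dots$ leaves the adjacency $b\le M-1\le (M-1)+1$, so condition (ii) survives), and conversely, inserting after each copy of $M-1$ a block of some number $\ge 0$ of $M$'s, with block lengths summing to $v_M$, always produces an admissible sequence (the only new adjacencies are $(M-1),M$, then $M,M$, then $M,b$ with $b\le M-1$, all of which satisfy condition (ii), and position $1$ is untouched). The number of ways to choose the block lengths is the number of weak compositions of $v_M$ into $v_{M-1}$ parts, namely $\binom{v_{M-1}+v_M-1}{v_M}$. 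Hence the count factors as
$$ \binom{v_{M-1}+v_M-1}{v_M}\cdot(\text{number of admissible sequences on }0,\dots,M-1). $$
Iterating down to the base case of the all-zero sequence (the unique admissible sequence when $M=0$, giving an empty product equal to $1$) telescopes to $\prod_{i=0}^{M-1}\binom{v_i+v_{i+1}-1}{v_{i+1}}$; and since $v_i=0$ for $i>M$, each remaining factor equals $\binom{v_i-1}{0}=1$, so this agrees with the stated infinite product.

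The hard part will be the bijection in the third paragraph: one must verify carefully that the deletion map lands in the admissible sequences on the smaller value set and that block-insertion is its two-sided inverse, in particular that distinct copies of $M-1$ furnish distinct insertion gaps so that the stars-and-bars count genuinely applies without overcounting. Once this structural bijection and its validity check are in place, the binomial factor and the induction on $M$ are routine.
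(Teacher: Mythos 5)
Your proposal is correct, and it fleshes out exactly the approach the paper attributes to Goldman, Joichi, and White --- the paper itself gives no proof of Theorem~\ref{gjw3}, citing~\cite{gjw:rtI} and describing the method only as ``counting the number of ways to reorder $\xi(B)$ so that the resulting vector is still the root vector for a board.'' Your peel-off-the-maximum bijection (maximal runs of $M$'s anchored to copies of $M-1$, counted by weak compositions) is a valid and complete way to carry out that count, and the stars-and-bars factor $\binom{v_{M-1}+v_M-1}{v_M}$ together with the induction on $M$ correctly yields the stated product.
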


\section{Graph Theory of Rook Placements}
\label{sec:gtrp}

This section defines the rook equivalence graph motivated by geometric manipulation of the shape of rook equivalent Ferrers boards. We then segue into considering not the geometry of the boards, but their algebra, in the form of their root vectors. This enables us to develop a criterion for when a rook equivalence graph is connected.

In order to connect rook theory to graph theory, we define the \emph{rook equivalence graph} as follows. Given a board $B$, the vertex set $V$ will be the set of all boards rook equivalent to $B$, including $B$ itself. The edge set consists of pairs of boards $\{B_1,B_2\}$ such that $B_1$ and $B_2$ differ in only two columns $i$ and $j$ where $B_1$ has $k$ more squares in column $i$ than $B_2$ and $k$ fewer squares in column $j$ than $B_2$. Graphically, we can consider transforming $B_1$ into $B_2$ by taking $k$ squares from the top of column $i$ and moving them to the top of column $j$. Figure~\ref{edge} illustrates this with a pair of boards connected by an edge. Both root vectors are shown to demonstrate that $B_1 \equiv B_2$. We denote the root equivalence graph of the rook equivalence class containing $B$ by $G(B)$.

\bfi
\begin{tikzpicture}
\draw(-.5,1) node {$B_1=$};
\foreach \x in {0} 
   \draw (\x,0)--(\x,.5);
\foreach \x in {.5,1} 
   \draw (\x,0)--(\x,1);
\foreach \x in {1.5,2}
   \draw (\x,0)--(\x,2);
\foreach \y in {0,.5}
	\draw (0,\y)--(2,\y);
\foreach \y in {1}
        \draw (.5,\y)--(2,\y);
\foreach \y in {1.5,2}
          \draw(1.5,\y)--(2,\y);
\draw [fill=lightgray] (0,0) rectangle (.5,.5);
\end{tikzpicture}
\hs{40pt}
\begin{tikzpicture}
\draw(-.5,1) node {$B_2=$};
\draw (0,0)--(.5,0);
\foreach \x in {.5} 
   \draw (\x,0)--(\x,1);
\foreach \x in {1}
   \draw (\x,0)--(\x,1.5);
\foreach \x in {1.5,2}
   \draw (\x,0)--(\x,2);
\foreach \y in {0,.5,1}
	\draw (.5,\y)--(2,\y);
\foreach \y in {1.5}
        \draw (1,\y)--(2,\y);
\foreach \y in {2}
          \draw(1.5,\y)--(2,\y);
\draw [fill=lightgray] (1,1) rectangle (1.5,1.5);
\end{tikzpicture}
\capt{\label{edge} $B_1= (1,2,2,4)$, $\xi(B_1)=\langle-1,-1,0,-1\rangle$. $B_2=(0,2,3,4)$, $\xi(B_2)=\langle0,-1,-1,-1\rangle$.}
\efi

Consider that $B_1$ and $B_2$ in Figure~\ref{edge} differ only by moving squares, the one shaded square in this case, from a column of $B_1$ to another column in $B_2$. Remember that we must consider $B_2$ as having four columns, in order to compare it to $B_1$ which also has four columns. Finally, examine how the two root vectors differ only in positions $1$ and $3$. where the values have been swapped, this motivates the next theorem.

\begin{thm}
\label{rootswap}
If $B_1 \equiv B_2$, both containing $n$ columns, are such that $\{B_1,B_2\}\in E$, then the root vectors $\xi(B_1)$ and $\xi(B_2)$ differ only in two positions, which are swapped. That is, if $\xi(B_1) = \langle z_1,\ldots,z_{i_1},\ldots,z_{i_2},\ldots,z_n\rangle$ then $\xi(B_2)=\langle z_1,\ldots,z_{i_2},\ldots,z_{i_1},\ldots,z_n\rangle$ for some $1< i_1 < i_2 \leq n$ where $z_{i_1} \neq z_{i_2}$.
\end{thm}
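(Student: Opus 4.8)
The plan is to compute the two root vectors directly from the column heights and then use rook equivalence to pin down their relationship. Recall that the $m$th entry of a root vector is $\xi(B)_m = (m-1) - b_m$. Let $i$ and $j$ be the two columns in which $B_1$ and $B_2$ differ, labeled so that $(B_1)_i = (B_2)_i + k$ and $(B_1)_j = (B_2)_j - k$ with $k \neq 0$. For every $m \notin \{i,j\}$ the two boards have equal heights in column $m$, so $\xi(B_1)_m = (m-1) - (B_1)_m = (m-1) - (B_2)_m = \xi(B_2)_m$. In the two remaining positions the same formula gives $\xi(B_1)_i = \xi(B_2)_i - k$ and $\xi(B_1)_j = \xi(B_2)_j + k$. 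Since $k \neq 0$, this shows the root vectors agree everywhere except in positions $i$ and $j$, where they genuinely differ.

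This computation by itself only says the two entries are shifted by $\mp k$; the heart of the proof is to upgrade this to a swap, and for that I would invoke rook equivalence. By Corollary~\ref{rearrange}, $B_1 \equiv B_2$ forces $\xi(B_2)$ to be a rearrangement of $\xi(B_1)$, so the two root vectors have the same multiset of entries. Because they already agree outside positions $i$ and $j$, the unordered pairs occupying those two positions must coincide. Writing $a = \xi(B_2)_i$ and $b = \xi(B_2)_j$, this is the multiset identity $\{a - k,\, b + k\} = \{a,\, b\}$. Matching entries gives either $a - k = a$ and $b + k = b$, which forces $k = 0$ and is excluded, or else $a - k = b$ and $b + k = a$, both of which reduce to $a - b = k$. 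In this surviving case $\xi(B_1)_i = a - k = b = \xi(B_2)_j$ and $\xi(B_1)_j = b + k = a = \xi(B_2)_i$, so the two entries are exactly interchanged; moreover $a \neq b$ because $a - b = k \neq 0$, which yields the required $z_{i_1} \neq z_{i_2}$.

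Finally, setting $i_1 = \min(i,j)$ and $i_2 = \max(i,j)$ records the swap in the stated form, with $i_1 < i_2 \leq n$ immediate. To obtain the strict bound $1 < i_1$, I would adopt the padding convention of Theorem~\ref{gjw3}, under which every board in the class is padded with enough zero columns that all root-vector entries are non-negative; then $\xi(B)_1 = -b_1 \geq 0$ forces $b_1 = 0$, so position $1$ is a common empty column with $\xi(B_1)_1 = \xi(B_2)_1 = 0$ and cannot be one of the swapped positions. The only genuine obstacle is recognizing that the raw geometric move gives merely a $\pm k$ shift; the rearrangement criterion of Corollary~\ref{rearrange} is precisely what converts that shift into a transposition of two distinct values.
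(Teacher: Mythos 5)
Your proposal is correct and takes essentially the same approach as the paper: compute both root vectors directly from the column heights, note that they agree outside the two altered columns and are shifted there by $\pm k$, and then invoke Corollary~\ref{rearrange} to upgrade that shift to a transposition of two entries. If anything you are more careful than the paper, whose proof never addresses the claims $z_{i_1}\neq z_{i_2}$ or $1<i_1$ (the latter bound is in fact contradicted by the paper's own Figure~\ref{edge}, where positions $1$ and $3$ are swapped, unless one adopts a padding convention like yours, so it should presumably read $1\leq i_1$).
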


\begin{proof}
Without loss of generality, let us assume that the squares which change columns between $B_1$ and $B_2$ are further to the left in $B_1$ than in $B_2$. By the definition of $\{B_1,B_2\} \in E$, if $B_1=(b_1,\ldots,b_{i_1},\ldots,b_{i_2},\ldots,b_n)$ then $B_2 = (b_1,\ldots,b_{i_1}-k,\ldots,b_{i_2}+k,\ldots,b_n)$ for some integer $k>0$. Then $\xi(B_1) = \langle-b_1,\ldots,(i_1-1)-b_{i_1},\ldots,(i_2-1)-b_{i_2},\ldots,(n-1)-b_n\rangle$ and $\xi(B_2) = \langle-b_1,\ldots,(i_1-1)-b_{i_1}+k,\ldots,(i_2-1)-b_{i_2}-k,\ldots,(n-1)-b_n\rangle$.

However, $\{B_1,B_2\} \in E$ implies that $B_1 \equiv B_2$ which further implies that $\xi(B_2)$ is a rearrangement of the elements of $\xi(B_1)$. Since $\xi(B_1)$ and $\xi(B_2)$ only differ in two positions, it must be the case that $(i_1-1)-b_{i_1} = (i_2-1)-b_{i_2}-k$ and $(i_2-1)-b_{i_2} = (i_1-1)-b_{i_1}+k$. Therefore the vectors $\xi(B_1)$ and $\xi(B_2)$ are identical, except for in two indexes where their elements are swapped.
\end{proof}

While the edges of the rook equivalence graph are originally motivated by the geometric notion of moving squares from one column to another, henceforth we will usually show that two boards are connected by an edge if their root vectors differ by two swapped elements. We will return to the geometric motivation again in Section~\ref{sec:fproj}. For now, having characterized when $B_1$ and $B_2$ are connected by an edge in terms of their respective root vectors, we shall develop a criterion for when the rook equivalence graph of the equivalence class containing $B$ is connected.

\begin{lem}
\label{checktwice}
Suppose $\xi(B)=\langle z_1,\ldots,z_n\rangle$ is the root vector for some Ferrers board. Let $\xi'$ be equivalent to $\xi(B)$ except in positions $1<i_1$ and $1<i_2$ which are swapped, and further assume that $z_{i_1}>z_{i_2}$. If $z_{i_2-1}+1 \geq z_{i_1}$ and $z_{i_2}+1 \geq z_{i_1+1}$ we may conclude that $\xi'$ is the root vector for some Ferrers board.
\end{lem}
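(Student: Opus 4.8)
The plan is to verify directly that $\xi'$ satisfies the two conditions of Theorem~\ref{gjw2}, namely $\xi'_1 \leq 0$ and $\xi'_i + 1 \geq \xi'_{i+1}$ for every valid index $i$. Since $\xi(B)$ is already known to be a root vector, it already satisfies both conditions, so the strategy is to isolate precisely which inequalities can be affected by swapping the entries in positions $i_1$ and $i_2$ and to check only those. First I would observe that because the swap occurs in positions $i_1, i_2 > 1$, the first entry is untouched, so $\xi'_1 = z_1 \leq 0$ and condition (i) holds automatically. This disposes of the easy half.

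For condition (ii), the key observation is that the only inequalities $\xi'_i + 1 \geq \xi'_{i+1}$ that can possibly fail are those involving a position whose value changed, i.e. the four ``boundary'' inequalities straddling $i_1$ and $i_2$: the pairs $(i_1 - 1, i_1)$, $(i_1, i_1+1)$, $(i_2 - 1, i_2)$, and $(i_2, i_2 + 1)$. Every other consecutive pair involves two unchanged entries and hence still satisfies the inequality inherited from $\xi(B)$. So I would write out these four inequalities explicitly, substituting the swapped values $\xi'_{i_1} = z_{i_2}$ and $\xi'_{i_2} = z_{i_1}$. The two hypotheses of the lemma, $z_{i_2 - 1} + 1 \geq z_{i_1}$ and $z_{i_2} + 1 \geq z_{i_1 + 1}$, are precisely the statements that the pairs $(i_2 - 1, i_2)$ and $(i_1, i_1 + 1)$ now hold after the swap; these are the two nontrivial checks the hypotheses are designed to supply.

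The remaining two boundary inequalities, $(i_1 - 1, i_1)$ and $(i_2, i_2 + 1)$, I expect to follow ``for free'' from the original conditions on $\xi(B)$ together with the assumption $z_{i_1} > z_{i_2}$. For the pair $(i_1 - 1, i_1)$ we need $z_{i_1 - 1} + 1 \geq z_{i_2}$; since $\xi(B)$ is a root vector we already have $z_{i_1 - 1} + 1 \geq z_{i_1} > z_{i_2}$, so this holds. Similarly for $(i_2, i_2 + 1)$ we need $z_{i_1} + 1 \geq z_{i_2 + 1}$, which follows from $z_{i_1} > z_{i_2} \geq z_{i_2 + 1} - 1$ using the original inequality $z_{i_2} + 1 \geq z_{i_2 + 1}$. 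A small subtlety to handle carefully is the possibility that the swapped positions are adjacent, $i_2 = i_1 + 1$, in which case the pairs $(i_1, i_1+1)$ and $(i_2 - 1, i_2)$ coincide and some of the intermediate entries collapse; I would either treat this as a degenerate case of the same computation or note that the four inequalities above still account for all affected pairs. The main obstacle, such as it is, is purely bookkeeping: making sure I have correctly identified all affected adjacent pairs and correctly matched each hypothesis to the inequality it rescues, rather than any deep difficulty — once the affected inequalities are enumerated, the verification is immediate from Theorem~\ref{gjw2}.
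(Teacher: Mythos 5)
Your main argument is exactly the paper's proof: condition (i) of Theorem~\ref{gjw2} survives because $i_1,i_2>1$; only the four consecutive pairs straddling $i_1$ and $i_2$ need rechecking; the two hypotheses are precisely the checks for the pairs $(i_2-1,i_2)$ and $(i_1,i_1+1)$; and the remaining two pairs come for free, via $z_{i_1-1}+1\geq z_{i_1}>z_{i_2}$ and via $z_{i_1}+1>z_{i_2}+1\geq z_{i_2+1}$. On this main line your proposal is correct and structurally identical to the paper's argument.

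The one place you go beyond the paper is the adjacency remark, and there your quick dismissal is wrong in one of the two orientations. If $i_2=i_1+1$ (larger entry immediately to the \emph{left} of the smaller one), then $z_{i_2-1}=z_{i_1}$ and $z_{i_1+1}=z_{i_2}$, so both hypotheses of the lemma degenerate to the trivial statements $z_{i_1}+1\geq z_{i_1}$ and $z_{i_2}+1\geq z_{i_2}$, while the swap creates a genuinely new adjacent pair requiring $z_{i_2}+1\geq z_{i_1}$, which nothing supplies. Concretely, $\xi(B)=\langle 0,1,2,0\rangle$ is the root vector of the board $(0,0,0,3)$; taking $i_1=3$, $i_2=4$, both hypotheses hold, yet the swapped vector $\langle 0,1,0,2\rangle$ violates condition (ii) of Theorem~\ref{gjw2}. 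So the claim that ``the four inequalities above still account for all affected pairs'' fails there; in fact the lemma as stated is false in that degenerate case. To be fair, this is a defect of the statement itself, and the paper's own proof makes the same silent omission; it is harmless in practice because every application in the paper (Lemmas~\ref{decreasing} and~\ref{maxleft}) swaps a smaller entry with a larger entry lying to its \emph{right}, i.e.\ $i_2<i_1$, and in that orientation the adjacent case $i_1=i_2+1$ merely collapses the pairs $(i_1-1,i_1)$ and $(i_2,i_2+1)$ into a single pair that is settled by $z_{i_1}>z_{i_2}$ alone. If you want your write-up to be airtight where the paper is not, either restrict the lemma to $i_2<i_1$ or explicitly exclude the configuration $i_2=i_1+1$.
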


\begin{proof}
Since we do not affect the first element of the root vector, we need only check that the second condition of Theorem~\ref{gjw2} is satisfied for all consecutive elements in $\xi'$. However, since the condition was already true for all pairs of neighbors in $\xi(B)$, we need only check the pairs that change in $\xi'$, specifically the elements of $\xi'$ in positions $i_1-1$ and $i_1$, positions $i_1$ and $i_1+1$, positions $i_2-1$ and $i_2$, and in positions $i_2$ and $i_2+1$. Since $\xi(B)$ is a root vector, we know $z_{i_1-1} +1 \geq z_{i_1}$, and since $z_{i_1} > z_{i_2}$ we must have $z_{i_1-1} + 1 \geq z_{i_2}$ as desired since those are the elements in positions $i_1-1$ and $i_1$ after the swap.

Similarly, we know that $z_{i_2}+1 \geq z_{i_2+1}$ since $\xi(B)$ is a root vector, so we must have $z_{i_1} + 1\geq z_{i_2+1}$. Thus the elements in positions $i_2$ and $i_2+1$ after the swap satisfy the second condition. All that is left is to check that the new element in position $i_1$ with the element to its right, and the new element in position $i_2$ with the element to its left. Thus $z_{i_2} +1 \geq z_{i_1+1}$ and $z_{i_2-1}+1\geq z_{i_1}$ together are equivalent to $\xi'$ being the root vector for some Ferrers board.
\end{proof}

Lemma~\ref{checktwice} enables us to examine only two pairs of neighbors to determine if a vector that results from swapping two elements of an existing root vector is still a root vector. Specifically, we must check the neighbor to the left of the element that increased in value, and the neighbor to the right of the element that decreased in value. We will use this to streamline the proof of the following lemma, which orders the right side of a root vector in weakly decreasing order by repeatedly increasing the leftmost element on the right side that is not already in weakly decreasing order. Figure~\ref{decfig} contains a short example of this process.

\begin{lem}
\label{decreasing}
Let $B$ be a Ferrers board with $n$ columns that has $M$ as the maximum element in its root vector. If the first appearance of $M$ in $\xi(B)$ is in position $j$, then vertex $B$ is connected by a sequence of edges in $G(B)$ to another Ferrers board $B'$ which has a root vector $\xi(B')$ identical to $\xi(B)$ in the first $j$ positions, and weakly decreasing for indices $i \geq j$.
\end{lem}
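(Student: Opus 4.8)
The plan is to keep positions $1,\dots,j$ frozen and to sort the tail $z_j,\dots,z_n$ into weakly decreasing order one slot at a time, from left to right, reading each sorting step as an edge of $G(B)$ through the root-vector description of edges in Theorem~\ref{rootswap} together with the validity test of Lemma~\ref{checktwice}. Since $z_j=M$ is the global maximum it already belongs at the front of a decreasing tail, so I would process the positions $t=j+1,j+2,\dots,n$ in turn, maintaining the invariant that positions $j,\dots,t-1$ hold the $t-j$ largest entries of the tail in weakly decreasing order. In particular this invariant guarantees that $z_{t-1}$ is at least as large as every entry in positions $t,\dots,n$, a fact I will use repeatedly.

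For a fixed $t$, while $z_t$ is not the maximum of $\{z_t,\dots,z_n\}$, I would let $b^{*}$ be the rightmost index exceeding $t$ with $z_{b^{*}}>z_t$ and swap positions $t$ and $b^{*}$. The step I expect to carry the real content is showing this swap is always a legal edge. Placing the larger value at $i_1=b^{*}$ and the smaller at $i_2=t$, Lemma~\ref{checktwice} asks only for $z_{t-1}+1\ge z_{b^{*}}$ and $z_t+1\ge z_{b^{*}+1}$. The first holds because the invariant gives $z_{t-1}\ge z_{b^{*}}$; the second holds because, $b^{*}$ being the rightmost index whose value exceeds $z_t$, its successor satisfies $z_{b^{*}+1}\le z_t$ (and when $b^{*}=n$ the condition is vacuous). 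Thus both hypotheses of Lemma~\ref{checktwice} become automatic exactly because of how $b^{*}$ and the frozen prefix are chosen — this is where the argument really lives. I would also dispatch the adjacency case $b^{*}=t+1$ and confirm $t\ge 2$ so that Lemma~\ref{checktwice} applies, which is fine since $j\ge 1$.

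For termination I would introduce the potential $\Phi=\sum_{i=j}^{n}(n-i)\,z_i$. Each swap moves a strictly larger value to a strictly smaller index, so $\Phi$ strictly increases; as the tail ranges over only finitely many rearrangements of a fixed multiset (all of which remain rook equivalent to $B$ by Corollary~\ref{rearrange}), $\Phi$ is bounded, so every while-loop halts. When the loop for a given $t$ halts we have $z_t=\max\{z_t,\dots,z_n\}$, which both advances the invariant and, once all $t$ have been processed, yields $z_t=\max\{z_t,\dots,z_n\}$ for every $t\ge j$; that is exactly the assertion that the tail is weakly decreasing. Because no swap ever touches a position $\le j$, the first $j$ entries still agree with $\xi(B)$, so the terminal vector is $\xi(B')$ for the desired Ferrers board $B'$, and the recorded sequence of swaps is a path from $B$ to $B'$ in $G(B)$. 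The main obstacle, to restate it, is not the bookkeeping but isolating a single swap rule — rightmost larger entry into the leftmost unfinished slot — for which both inequalities of Lemma~\ref{checktwice} hold for free.
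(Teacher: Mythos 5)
Your proposal is correct and follows essentially the same route as the paper's own proof: the identical swap rule (exchange the leftmost unfinished position with the rightmost strictly larger entry), the same invariant that the already-sorted prefix dominates everything to its right, and the same two inequalities supplied to Lemma~\ref{checktwice}, with Theorem~\ref{rootswap} turning each swap into an edge. Your potential-function argument for termination is merely a more explicit version of the paper's finiteness remark, so there is no substantive difference.
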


\begin{proof}
We will prove this by swapping elements of $\xi(B)$, in such a way that the vector remains a root vector for some board, until we arrive at the root vector $\xi(B')$. Theorem~\ref{gjw2} gives a set of conditions for when a vector is a root vector. As long as we do not alter the first element of the root vector, the first condition will hold, so all that we must consider is that the second condition, $\xi_{i+1} \leq \xi_i+1$, holds after each swap we make. As noted in Lemma~\ref{checktwice}, we only need to check two pairs of elements after every swap.

Since we have assumed that $z_j$ is the leftmost instance of the maximum element of $\xi(B)$, we already know that $z_j \geq z_i$ for all $i > j$. Next we want to make sure that $z_{j+1} \geq z_i$ for all $i > j+1$. If $z_{j+1}$ is not greater than some element further to the right than it, swap the $z_{j+1}$ and the rightmost element with a greater value than $z_{j+1}$. Let us denote the new position of element $z_{j+1}$ as index $k$ and thus the new element in position $j$ by $z_k$ since it was previously in position $k$ . Now let us check that the second condition of Theorem~\ref{gjw2} holds for the two pairs of neighboring elements that Lemma~\ref{checktwice} specifies. Since $z_k>z_{j+1}$ by assumption, we need to check that $z_j +1 \geq z_k$ and $z_{j+1} +1 \geq z_{k+1}$.

By assumption, $z_j$ is greater than or equal to all elements to its right because it is a maximum element of $\xi(B)$, so $z_j +1 \geq z_k$ must hold for this pair. Also, since we swapped $z_{j+1}$ with the rightmost element greater than it, we know that $z_{j+1} \geq z_{k+1}$ and thus that $z_{j+1}+1 \geq z_{k+1}$. Lemma~\ref{checktwice} allows us to conclude that this rearranged vector is the root vector for some board, and from Theorem~\ref{rootswap} we know that the new board and $B$ are connected by an edge in the rook equivalence graph.

Repeat this process for the new element in position $j+1$. Since the root vector is finite, at some point the process must terminate, which can only happen when the element currently in position $j+1$, which we will now call $z'_{j+1}$, is greater than or equal to all the elements to its right in the current root vector. The vector we obtain must be the root vector of some board, and that board will be connected by a sequence of edges back to $B$, the original board. Note that in the process of adjusting the element in the $j+1$st position, the only property of $z_j$ being a maximum element we used is that it was greater than or equal to all elements to its right, which is now also true of $z'_{j+1}$.

This allows us to repeat this process on $z_{j+2}$, eventually obtaining $z'_{j+2}$ which is greater than or equal to all elements to its right. Furthermore, the board we obtain at this point will be connected by a sequence of edges to the board which contained the original $z_{j+2}$ in the $j+2$nd position, and thus will be connected back to the original board $B$. Inducting on the index of the position being considered, we eventually obtain a board $B'$, connected by a sequence of edges back to $B$, such that for every $z'_i$ with $i \geq j$ in $\xi(B')$, $z'_i$ is greater than or equal to every element to its right in $\xi(B')$. In particular, $z'_i \geq z'_{i+1}$ so the elements at or to the right of position $j$ are in weakly decreasing order.
\end{proof}

\bfi
$\langle0,1,1,2,1,2,3,4,2,2,3,4,1,2,3\rangle$

$\langle0,1,1,2,1,2,3,4,{\bf 3},2,3,4,1,2,{\bf 2}\rangle$

$\langle0,1,1,2,1,2,3,4,{\bf 4},2,3,{\bf 3},1,2,2\rangle$

$\langle0,1,1,2,1,2,3,4,4,{\bf 3},3,{\bf 2},1,2,2\rangle$

$\langle0,1,1,2,1,2,3,4,4,3,3,2,{\bf 2},2,{\bf 1}\rangle$
\capt{\label{decfig} Swapping pairs of elements until the root vector is weakly decreasing past its maximum element. The pair of bold elements are the two elements swapped in each step.}
\efi

Consider Figure~\ref{decfig} for a short example of the sequence of element swaps that transforms $\xi(B)$ to $\xi(B')$ as described in Lemma~\ref{decreasing}. Since we can order the elements to the right of the maximum element of $\xi(B)$ in weakly decreasing order while remaining in a connected component of the rook equivalence graph, our next lemma tackles the task of shifting the maximum element to the left as far as possible. We do this by finding the largest element that occurs at least twice to the left of the maximum element, and swapping elements until the size of the largest element occurring twice to the left of the maximum element decreases. This can be accomplished as long as every value between that of the element to be adjusted and the value of the maximum element occurs in the root vector at least twice. Figure~\ref{mleft} gives an example of this process.

\begin{lem}
\label{maxleft}
Let $B$ be a Ferrers board with $n$ columns, padded with columns of height zero on the left so that $\xi(B)=\langle z_1,\ldots,z_n\rangle$ contains only non-negative entries. Remember $v_k$ is the number of entries of the root vector of value $k$. Let $M$ be the greatest integer such that $v_M \neq 0$. If $v_i >1$ implies that $v_{i+1} >1$ for $0\leq i < M-1$, then $B$ is connected by a sequence of edges in $G(B)$ to another Ferrers board $B'$ which has a root vector $\xi(B') = \langle0,1,\ldots,M-1,M,\ldots,z_n\rangle$. That is to say, the first $M+1$ entries of $\xi(B')$ are the integers from $0$ to $M$ in increasing order.
\end{lem}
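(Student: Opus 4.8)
The plan is to reformulate the claim entirely in terms of root vectors and to exhibit an explicit sequence of swaps, each validated by the principle of Lemma~\ref{checktwice} and hence an edge by Theorem~\ref{rootswap}, that transports a copy of the maximum value $M$ leftward until the first $M+1$ entries are exactly $0,1,\ldots,M$. First I would record the structural consequences of the hypotheses. Because $\xi(B)$ is non-negative, condition (i) of Theorem~\ref{gjw2} forces $z_1=0$, and the remark following Theorem~\ref{gjw2} shows that each of $0,1,\ldots,M$ occurs and that the first occurrence of any value $k>0$ is immediately preceded by an entry equal to $k-1$; in particular the first $M$ is preceded by an $M-1$. The multiplicity hypothesis, that $v_i>1$ implies $v_{i+1}>1$ for $0\le i<M-1$, says that the set $\{i\le M-1: v_i>1\}$ is an up-closed interval $[t,M-1]$: once some value below $M$ is repeated, every larger value up to $M-1$ is repeated as well. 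This is exactly the property that will let the argument ``borrow'' spare copies of smaller values.

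The core step I would prove is a single leftward move: if the first $M$ sits at position $p$ and the prefix $z_1,\ldots,z_{p-1}$ is not already $0,1,\ldots,M-1$, then $B$ is connected by edges to a board in which a copy of $M$ has moved strictly to the left. Swapping the $M$ at position $p$ with the $M-1$ at position $p-1$ changes only the two neighbour pairs that Lemma~\ref{checktwice} isolates; the pair to the right of the decreased entry is automatically fine since $M$ is maximal, so the move is legal exactly when the entry $z_{p-2}$ two places left of $M$ equals $M-1$. The task therefore reduces to manufacturing a second $M-1$ immediately to the left of the first. I would accomplish this by sliding a spare copy of $M-1$ rightward toward position $p-2$; an analogous check shows that an $M-1$ can only hop past an entry equal to $M-2$, so an $M-2$ must first be routed into its path, which in turn requires a spare $M-3$, and so on down the descending chain $M-1,M-2,\ldots,t$. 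The up-closed structure $[t,M-1]$ guarantees that every value met along this chain genuinely has a spare, so the borrowing never stalls before it bottoms out at a value whose extra copy is freely available.

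Finally I would package the moves with a termination argument: the largest value occurring at least twice strictly to the left of the first $M$, or equivalently the position of $M$ itself, serves as a monotone potential that strictly decreases with each completed leftward move and is bounded below, so the process halts. It can only halt when no value is repeated to the left of $M$, and a valid root-vector prefix that uses each of $0,\ldots,M-1$ exactly once, starts at $0$, and increases by at most $1$ at each step must be $0,1,\ldots,M-1$ in order; the $M$ then sits in position $M+1$, yielding $\xi(B')=\langle 0,1,\ldots,M,\ldots\rangle$. The main obstacle is the bookkeeping inside the core step: verifying that each swap in the borrowing chain preserves the two conditions of Theorem~\ref{gjw2} (the delicate check is always the pair just to the right of the entry that was decreased, where an overly small deposited value could break condition (ii)), and confirming that the chain can be executed without disturbing the segment already placed. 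If it streamlines these checks, one may first invoke Lemma~\ref{decreasing} to normalise the tail past $M$ to weakly decreasing order, bounding how large the troublesome right-neighbours can be.
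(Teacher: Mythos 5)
Your reduction to root vectors, the use of Theorem~\ref{gjw2} and Lemma~\ref{checktwice} to validate swaps, and the closing observation that a repeat-free prefix must be $0,1,\ldots,M-1,M$ are all fine, but the core mechanism --- moving the first $M$ leftward by an adjacent transposition with the $M-1$ before it, enabled by ``sliding'' spare copies of $M-1,M-2,\ldots$ into position --- has a genuine gap: the borrowing chain cannot be executed in general. Consider $\xi(B)=\langle 0,1,2,3,1,2,3,4\rangle$, i.e.\ the board $(3,3,3,3)$ padded with four zero columns; here $M=4$ and $v_1=v_2=v_3=2$, $v_0=v_4=1$, so the hypothesis of the lemma holds. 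Your core step at $p=8$ needs $z_6=3$, hence a spare $3$ at position $6$, hence (since $z_5=1$) a spare $2$ at position $5$ first. But every swap that would extract a spare $2$ or $3$ from the prefix is illegal: the spares sit inside ascending runs, and replacing such an entry by something smaller breaks condition (ii) of Theorem~\ref{gjw2} for the entry to its right. For instance, swapping positions $3$ and $5$ puts the values $1,3$ adjacent at positions $3,4$, and swapping positions $5$ and $6$ puts $1,3$ adjacent at positions $6,7$. In fact one can check that this vertex admits exactly one legal swap of any kind, namely exchanging the $1$ in position $5$ with the $4$ in position $8$; no adjacent transposition involving $M$, and no slide of a spare copy, is valid. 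So your process stalls without ever moving $M$, and your potential function (the position of the first $M$) never gets a chance to decrease. A secondary symptom of the same flaw is directional: you describe sliding spares \emph{rightward} toward $p-2$, but in general (e.g.\ in the paper's Figure~\ref{mleft}) the spare copies of the large values live to the \emph{right} of the first $M$, and your moves never explain how they cross it.

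The move that works --- and this is how the paper argues --- goes in the opposite direction from yours. Instead of working top-down from $M-1$ at position $p-2$, locate the largest value $o$ repeated to the left of the first $M$, go to its leftmost occurrence, position $k$, and repeatedly replace the entry at position $k+1$ by the \emph{rightmost} entry at or beyond the first $M$ that exceeds it (after first normalizing that tail to be weakly decreasing via Lemma~\ref{decreasing}). Up-closedness of the repeated values guarantees that $o+1,\ldots,M-1$ each occur in the tail, so the pulled value is always at most $o+1\le z_k+1$ and the left-neighbour check of Lemma~\ref{checktwice} passes, while the rightmost-choice makes the check at the extraction site automatic; crucially, the pulled value may be $M$ itself, which is exactly the unique legal move in the example above. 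This builds an ascending run $o,o+1,\ldots,M$ starting at position $k$ and strictly decreases the largest value repeated left of the first $M$, so iteration terminates in the desired prefix. The essential point your proposal misses is that spares must be harvested from the weakly decreasing tail, where removal is safe, rather than shuffled within the rigid prefix, and that $M$ is pulled down by a long-range swap rather than walked leftward by adjacent ones.
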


\begin{proof}
Assume that the leftmost occurrence of the maximum element $M$ occurs at index $j$, so that $M=z_j$. Since every positive value must occur for the first time in the root vector directly to the right of an element of value one smaller, elements of every value from $0$ to $M-1$ occur at least once to the left of index $j$. Lemma~\ref{decreasing} allows us to assume that $\xi(B)$ is non-increasing in positions $i \geq j$ without loss of generality we. Let $o$ be the largest value in $\xi(B)$ such that $o$ occurs in at least two positions to the left of $z_j$. Let the leftmost occurrence of element $o$ in $\xi(B)$ be at position $k<j$.


Begin by swapping the elements in the $k+1$st position with the rightmost, and therefore smallest, element $z_i$ with $i \geq j$ such that $z_i >z_{k+1}$. We are guaranteed such an element exists because $o<M = z_j$. As before, Lemma~\ref{checktwice} specifies which two pairs of neighbors we should check to determine that the resulting vector is the root vector of some board. We are assuming that $z_i > z_{k+1}$, so we need to check that $z_{k} + 1\geq z_i$ and $z_{k+1} +1 \geq z_{i+1}$. Since $o$ appears at least twice to the left of $z_j$, $e_o >1$, which implies that either $e_{o+1}>1$ or $o+1 = M$, if $o+1 \neq M$, then an element of value $o+1$ exists to the right of position $j$, since $o$ is the largest value appearing at least twice to the left of index $j$. Either way, we are guaranteed there is an element in or to the right of position $j$ with value $o+1$, since we replace $z_{k+1}$ with $z_i$, the smallest element larger than it with $i \geq j$, we know $z_k+1 \geq z_i$ but $z_i \leq o+1$. Since $z_k = o$, we get $z_k+1 \geq z_i$. We swapped $z_{k+1}$ with the rightmost element bigger than it, guaranteeing that $z_{i+1} \leq z_{k+1}$, and thus that $z_{k+1} +1 \geq z_{i+1}$. Once again, we may conclude that the resulting vector is the root vector of some board, by Lemma~\ref{checktwice}, and that the new board is connected by an edge to our old board in the rook equivalence graph by Theorem~\ref{rootswap}.

Because we swapped $z_{k+1}$ with the rightmost element with index $i \geq j$ greater than it, and since the elements with index $i \geq j$ were already in non-decreasing order, after performing this swap, the elements with index $i \geq j$ are still in non-decreasing order in the new root vector. That means that if element $o$ in position $k$ is still the leftmost instance of the largest element that occurs at least twice to the left of $z_j = M$, we can repeat this process to increase the value of $z_{k+1}$ yet again. The only way this could fail to hold true is if, after swapping, the new value in position $k+1$ is actually greater than $o$, which means it must be $o+1$. In this case, either there are two occurrences of value $o+1$ to the left of position $j$, the one that was already there and the one that started to the right of position $j$ and was swapped into position $k+1$, or $o+1 = M$ and the leftmost maximum element is now in position $k+1$.

If $o+1 \neq M$, then it is now the greatest value of an element that occurs twice to the left of position $j$, since it already occurred once to the left of $j$, then we swapped in another element of this value from the right of position $j$. In this case repeat the process from above to increase the value of the element in the $k+2$nd position. As long as $M$ is not the value to be swapped into a position, we can iterate this process to put a value one greater in the position one to the right.

As soon as $o+1 = M$ we can no longer repeat this process, because $o+1$ is no longer a value that occurs twice to the left of $M$, it has become $M$. Since the root vector is finite, this will happen after a finite number of iterations. At this point we will have created a sequence of entries in our root vector where the $k$th entry is our original $o$, the $k+1$st entry is $o+1$, and so on until the $k+(M-o)$th entry is $M$. Specifically, since our original $o$ was the leftmost occurrence of value $o$ to the left of the first occurrence of value $M$, now value $o$ only occurs once to the left of the first occurrence of value $M$. Therefore we have either decreased the largest value that occurs twice to the left of the first occurrence of $M$, or now no value occurs twice to the left of the first occurrence of $M$.

Since the root vector is finite and each application of this processes moves the first occurrence of $M$ to the left, after repeated applications of this process, eventually there will be no value that occurs twice to the left of the first occurrence of $M$ in the root vector. At this point, the first $M+1$ elements of the root vector must be $\xi(B') = \langle0,1,\ldots,M-1,M,\ldots\rangle$. Because each swap we performed corresponded to an edge in the rook equivalence graph, the final board $B'$ is connected to the original board $B$ by a sequence of edges, completing the proof.
\end{proof}

\bfi
$\langle0,1,1,2,1,2,3,4,4,3,3,2,2,2,1\rangle$

$\langle0,1,1,2,2,2,3,4,4,3,3,2,2,1,1\rangle$

$\langle0,1,1,2,3,2,3,4,4,3,2,2,2,1,1\rangle$

$\langle0,1,1,2,3,3,3,4,4,2,2,2,2,1,1\rangle$

$\langle0,1,1,2,3,4,3,4,3,2,2,2,2,1,1\rangle$
\capt{\label{mleft} Swapping pairs of elements until the greatest element occurring twice to the left of the first $4$ decreases from value $2$ to value $1$.}
\efi

Figure~\ref{mleft} illustrates a step process described in Lemma~\ref{maxleft}. Note that in the first vector $2$ is the largest value occurring twice to the left of the first $4$, but by the final vector $1$ is the only value occurring twice to the left of the first $4$. At that point we could use the guarantee of Lemma~\ref{decreasing} to reorder the elements to the right of the first $4$ in non-increasing order, then reapply the process to reduce the number of elements of value $1$ left of the first $4$ to only one.

With Lemma~\ref{decreasing} and Lemma~\ref{maxleft}, we are ready to prove the major theorem of this section, which determines whether the rook equivalence graph $G(B)$ is connected based on the elements of $\xi(B)$.

\begin{thm}
\label{connected}
Let $B$ be a Ferrers board with $n$ columns, padded with columns of height zero on the left so that $\xi(B)=\langle z_1,\ldots,z_n\rangle$ contains only non-negative entries. As before, let $M$ be the greatest integer such that $v_M \neq 0$. $G(B)$ is connected if and only if $v_i >1$ implies that $v_{i+1} >1$ for $0\leq i < m-1$.
\end{thm}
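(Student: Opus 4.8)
The plan is to prove the two implications separately, with essentially all the new content going into necessity. A useful preliminary observation is that the stated condition (I read the ``$m-1$'' as $M-1$) depends only on the numbers $v_k$, so it is a property of the entire rook equivalence class rather than of the particular vertex $B$; by Corollary~\ref{rearrange} this is what allows a single argument to apply to every vertex at once.

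For sufficiency I would simply chain the two preceding lemmas. The hypothesis is verbatim the hypothesis of Lemma~\ref{maxleft}, so every board in the class is joined in $G(B)$ to a board whose root vector begins $\langle 0,1,\ldots,M,\ldots\rangle$. Applying Lemma~\ref{decreasing} to that board (whose first occurrence of $M$ now sits in position $M+1$) sorts the entries in positions $\ge M+1$ into weakly decreasing order, yielding the vector $\langle 0,1,\ldots,M,w\rangle$ where $w$ is the weakly decreasing arrangement of the multiset left after deleting one copy of each of $0,1,\ldots,M$. Since the full multiset is fixed across the class, this target is one well-defined vertex, every vertex is connected to it, and $G(B)$ is connected.

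For necessity I would argue the contrapositive via an invariant. First, Theorem~\ref{gjw2} gives that the first occurrence of any value $\ge t$ is an occurrence of $t$ preceded by $t-1$; in particular every value $0,\ldots,M$ occurs, so $v_{i+1}=0$ is impossible and a failure of the condition must take the form: there is an $i$ with $0\le i<M-1$, $v_i\ge 2$, and $v_{i+1}=1$. Let $p$ be the position of the \emph{unique} entry of value $i+1$. The proposed invariant is $p$. The heart of the argument is that no edge moves this gateway entry. Because $v_{i+1}=1$, the first-occurrence fact forces $\xi_{p-1}=i$, and because $i+2\le M$ still occurs it forces $\xi_{p+1}=i+2$. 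By Theorem~\ref{rootswap} an edge moving the gateway must swap position $p$ with a position carrying some value $w\neq i+1$, after which position $p$ holds $w$. If $w>i+1$ this cell has increased, and Lemma~\ref{checktwice} requires $\xi_{p-1}+1\ge w$, i.e. $i+1\ge w$, which is false; if $w<i+1$ this cell has decreased, and Lemma~\ref{checktwice} requires $w+1\ge\xi_{p+1}=i+2$, i.e. $w\ge i+1$, again false (the partner cannot sit at $p+1$ since $\xi_{p+1}=i+2\neq w$). Hence $p$ is unchanged by every edge and is constant on connected components.

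To conclude disconnectedness I would exhibit two vertices with different $p$. The full increasing run $\langle 0,1,\ldots,M,\ldots\rangle$ completed by a weakly decreasing tail gives $p=i+2$, whereas inserting the second copy of $i$ (available since $v_i\ge 2$) just before the gateway, as in $\langle 0,1,\ldots,i,i,i+1,i+2,\ldots\rangle$, gives $p=i+3$; both satisfy conditions (i)--(ii) of Theorem~\ref{gjw2}, so both are genuine vertices of the same class. Different invariant values force different components, so $G(B)$ is disconnected. The main obstacle is precisely the immovability of the gateway: the forced identities $\xi_{p-1}=i$ and $\xi_{p+1}=i+2$ are what drive the case analysis, so the crux is deducing these carefully from Theorem~\ref{gjw2} and then reading off the single obstructing inequality from Lemma~\ref{checktwice} in each case. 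Once the invariant is established, the uniqueness of the sufficiency target and the construction of the two witness boards are routine.
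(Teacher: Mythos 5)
Your proposal is correct and follows essentially the same route as the paper: sufficiency by chaining Lemma~\ref{maxleft} and Lemma~\ref{decreasing} to connect every vertex to the unique increasing representative of Theorem~\ref{unique}, and necessity by showing the unique entry of value $i+1$ (the paper's $v$) is pinned between the forced neighbors $i$ and $i+2$ so its position is an edge-invariant, then exhibiting two root vectors in the class with that entry in positions differing by one. Your version merely makes explicit the case analysis behind the paper's terse immovability claim (and correctly reads the statement's ``$m-1$'' as $M-1$), which is a welcome tightening rather than a different proof.
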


\begin{proof}
The proof that the rook equivalence graph is connected if $v_i>1$ implies $v_{i+1}>1$ for $0\leq i<m-1$ is basically complete given Lemma~\ref{decreasing}, Lemma~\ref{maxleft}, and Theorem~\ref{unique}. By Lemma~\ref{maxleft}, we can find a sequence of edges from $B$ to a rook equivalent board where the rook vector entries begin with the sequence $0,1,\ldots,M-1,M$. Lemma~\ref{decreasing} guarantees a sequence of edges from that board to one where the entries past the first occurrence of $M$ are non-increasing. Finally, the Goldman, Joichi, and White proof of Theorem~\ref{unique} implies that the board with this root vector is the unique board in the equivalence class with increasing column heights. Since an arbitrary board in the equivalence class is connected by a sequence of edges to the unique representative of the equivalence class, any two boards in the equivalence class can be connected by a sequence of edges.

To show the graph is connected only if these conditions hold, assume there is some value, $v<M$, that occurs exactly once in the root vector, in position $i$, but there is a value less than $v$ that occur at least twice. Since the entries of the root vector can increase by at most $1$ from entry $i$ to entry $i+1$, and root vectors with non-negative entries always have $0$ as their leftmost entry. The first time any positive value occurs, it must be proceeded immediately to the left by the value that is one less. Therefore, since $v\geq 0$ occurs only once, and there is at least one entry with value greater than $v$, it must be the case that $z_{i+1}=v+1$.

These facts imply that, by swapping entries one pair at a time, we can never move $v$ out of position $i$ or $v+1$ out of position $i+1$, because the first occurrence of value $v+1$ must always be proceeded directly be the unique entry of value $v$ so both are fixed. But, by our assumption, there is some value $u<v$, that occurs twice in the root vector. We use this value to construct two root vectors that could never be connected by an edge path.

The first root vector we consider is the one guaranteed by Theorem~\ref{unique}, consisting of values that increase by one left to right, beginning with value $0$ in position $1$ to the maximum value $M$ in position $M+1$, then proceed in weakly decreasing order to the right of position $M+1$. In this vector value $v$ must be in position $v+1$. Compare this with the root vector which is exactly the same, but contains two elements of value $u$ next to each other in the otherwise strictly increasing sequence from $0$ to $M$. This is still the root vector of some board, because it still begins with an element of value $0$ and no element increases by more than $1$ over the value of the element directly to its left. However, in this root vector, the unique element $v$ is in position $v+2$. If we only alter the order of elements in our root vector by pairwise swaps, element $v$ is required to stay in that position, due to the leftmost value of $v+1$ directly to its right. Therefore the two root vectors described can never be connected by a sequence of edges, and the rook equivalence graph is disconnected.
\end{proof}

\section{Common Graphs as Rook Equivalence Graphs}
\label{sec:commongraphs}

Having defined the rook equivalence graph and presented some of its properties, it is worth considering which graphs are actually realized as rook equivalence graphs. In this section we demonstrate that the complete graphs are a subset of rook equivalence graphs. We also present some notable absences in the form of small, complete bipartite graphs.

\begin{thm}
\label{completeg}
For any integer $n>0$, $K_n$, the complete graph on $n$ vertices, is isomorphic to $G(B)$ where $B = (0,\ldots,n-1,n-1)$.
\end{thm}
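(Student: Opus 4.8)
The plan is to compute the root vector of $B$, use it to list every board in the equivalence class explicitly, and then verify directly that any two of these boards are joined by an edge, so that $G(B)$ is complete.

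First I would compute $\xi(B)$. Since $B=(0,1,\ldots,n-1,n-1)$ has $n+1$ columns with $b_i=i-1$ for $1\le i\le n$ and $b_{n+1}=n-1$, the identity $\xi_i=(i-1)-b_i$ gives $\xi(B)=\langle 0,0,\ldots,0,1\rangle$, namely $n$ zeros followed by a single $1$. Hence $v_0=n$, $v_1=1$, and $v_i=0$ for $i\ge 2$. Next I would enumerate the vertices using Corollary~\ref{rearrange}: the boards rook equivalent to $B$ are exactly the rearrangements of the multiset $\{0^n,1\}$ that meet the two conditions of Theorem~\ref{gjw2}. Each rearrangement is determined by the position $p$ of the unique $1$. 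Condition (i) rules out $p=1$ (it would force $\xi_1=1>0$), while condition (ii) holds automatically for any placement of the $1$; so the class consists of precisely $n$ boards $B_2,\ldots,B_{n+1}$, where $B_p$ carries its $1$ in position $p$. Theorem~\ref{gjw3} confirms this count, since the product collapses to $\binom{n}{1}=n$. This already matches the $n$ vertices of $K_n$, and I would fix the vertex bijection $B_p\mapsto p-1$.

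The core step is to show every pair of distinct vertices is adjacent, for then any vertex bijection is automatically an isomorphism onto $K_n$. Given $B_p\ne B_q$, their root vectors agree everywhere except in positions $p$ and $q$, where the entries $1$ and $0$ are interchanged. Translating back through $b_i=(i-1)-\xi_i$, the boards $B_p$ and $B_q$ differ in exactly the two columns $p$ and $q$ (each by one square), and they contain the same number of squares because rook equivalence forces $r_1(B_p)=r_1(B_q)$. This is exactly the edge condition, so $\{B_p,B_q\}\in E$.

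The one point requiring care — and the only genuine obstacle — is justifying the converse of Theorem~\ref{rootswap}: that a single transposition of two distinct root-vector entries, yielding two valid boards, really produces an edge. Theorem~\ref{rootswap} supplies one direction, so I would argue the converse by the translation above, noting that differing in exactly two root-vector positions is equivalent to differing in exactly two columns, and that equal board sizes force the number of squares gained in one column to equal the number lost in the other. With all $\binom{n}{2}$ edges present among the $n$ vertices, I conclude $G(B)\cong K_n$.
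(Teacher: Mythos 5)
Your proposal is correct and follows essentially the same route as the paper: compute $\xi(B)=\langle 0,\ldots,0,1\rangle$, observe that the single $1$ can occupy any of the $n$ positions other than the first, and note that any two such placements are related by a swap of root-vector entries, i.e., an edge. Your extra care in verifying the converse of Theorem~\ref{rootswap} (translating the swap back to a one-square column move via $b_i=(i-1)-\xi_i$) makes explicit a step the paper leaves implicit, but it is the same argument.
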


\begin{proof}
Consider the root vector of $B=(0,\ldots,n-1,n-1)$. Since $\xi(B) = \langle0,\ldots,0,1\rangle$ must begin with an entry of value $0$, the single entry of value $1$ cannot occupy this position. However, since the root vector consists exclusively of elements of value $0$ or $1$, there is no way for any rearrangement of the root vector to have values that increase by more than $1$ from one element to the next. Thus, the element of value $1$ can appear in any of the other $n$ positions, including index $n+1$ where it begins. If the $1$ is in position $i>1$, it can be moved to position $j>1$ simply by swapping the elements in positions $i$ and $j$. Thus every one of the $n$ vertices of $G(B)$ is connected by an edge to every single other vertex of $G(B)$, and $G(B)$ is isomorphic to $K_n$.
\end{proof}
See Figure~\ref{completefig} for the four boards that form the four vertices of $K_4$ as per Theorem~\ref{completeg}.

\bfi
\begin{tikzpicture}
\draw(-.5,1) node {$B_1=$};
\foreach \x in {1} 
   \draw (\x,0)--(\x,1);
\foreach \x in {1.5}
   \draw (\x,0)--(\x,1.5);
\foreach \x in {2,2.5}
   \draw (\x,0)--(\x,2);
\foreach \y in {0}
	\draw (0,\y)--(2.5,\y);
\foreach \y in {.5,1}
        \draw (1,\y)--(2.5,\y);
\foreach \y in {1.5}
          \draw(1.5,\y)--(2.5,\y);
\foreach \y in {2}
         \draw(2,\y)--(2.5,\y);
\end{tikzpicture}
\hs{10pt}
\begin{tikzpicture}
\draw(-.5,1) node {$B_2=$};
\foreach \x in {.5,1} 
   \draw (\x,0)--(\x,.5);
\foreach \x in {1.5}
   \draw (\x,0)--(\x,1.5);
\foreach \x in {2,2.5}
   \draw (\x,0)--(\x,2);
\foreach \y in {0}
	\draw (0,\y)--(2.5,\y);
\foreach \y in {.5}
        \draw (.5,\y)--(2.5,\y);
\foreach \y in {1,1.5}
          \draw(1.5,\y)--(2.5,\y);
\foreach \y in {2}
         \draw(2,\y)--(2.5,\y);
\end{tikzpicture}
\hs{10pt}
\begin{tikzpicture}
\draw(-.5,1) node {$B_3=$};
\foreach \x in {.5} 
   \draw (\x,0)--(\x,.5);
\foreach \x in {1,1.5}
   \draw (\x,0)--(\x,1);
\foreach \x in {2,2.5}
   \draw (\x,0)--(\x,2);
\foreach \y in {0}
	\draw (0,\y)--(2.5,\y);
\foreach \y in {.5}
        \draw (.5,\y)--(2.5,\y);
\foreach \y in {1}
          \draw(1,\y)--(2.5,\y);
\foreach \y in {1.5,2}
         \draw(2,\y)--(2.5,\y);
\end{tikzpicture}
\hs{10pt}
\begin{tikzpicture}
\draw(-.5,1) node {$B_4=$};
\foreach \x in {.5} 
   \draw (\x,0)--(\x,.5);
\foreach \x in {1}
   \draw (\x,0)--(\x,1);
\foreach \x in {1.5,2,2.5}
   \draw (\x,0)--(\x,1.5);
\foreach \y in {0}
	\draw (0,\y)--(2.5,\y);
\foreach \y in {.5}
        \draw (.5,\y)--(2.5,\y);
\foreach \y in {1}
          \draw(1,\y)--(2.5,\y);
\foreach \y in {1.5}
         \draw(1.5,\y)--(2.5,\y);
\end{tikzpicture}
\capt{\label{completefig} The four vertices of $G(B_4)$, labeled so that $B_n$ has the $1$ in its root vector at index $n+1$. Note that $B_4$ is the board $B$ specified in the statement of Theorem~\ref{completeg}.}
\efi

\begin{thm}
\label{no22}
$K_{2,2}$ is not the rook equivalence graph for any Ferrers board.
\end{thm}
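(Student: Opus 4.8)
The plan is to argue by contradiction, combining the two enumeration results of Goldman, Joichi, and White with the connectedness criterion of Theorem~\ref{connected}. Suppose some Ferrers board $B$ had $G(B)\cong K_{2,2}$. Since $K_{2,2}$ is connected and has exactly four vertices, $G(B)$ would be a connected graph on four vertices. First I would pad $B$ with columns of height zero so that $\xi(B)$ has only non-negative entries and record the counts $v_i$; note that such padding merely prepends entries equal to $1$ to the sequence $(v_i)$ and therefore changes neither the product in Theorem~\ref{gjw3} nor the graph. Connectedness together with Theorem~\ref{connected} then forces the ``threshold'' shape $v_0=\cdots=v_{t-1}=1$ and $v_t,\ldots,v_{M-1}\geq 2$ for some $t$, with $v_M\geq 1$.

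Next I would extract the consequence of $|V|=4$ from Theorem~\ref{gjw3}, namely $\prod_{i\geq 0}\binom{v_i+v_{i+1}-1}{v_{i+1}}=4$. Every factor with $v_i=1$ equals $1$, so the only factors exceeding $1$ occur for $t\leq i\leq M-1$. A factor with both $v_i,v_{i+1}\geq 2$ is at least $\binom{3}{2}=3$, while the final factor, at index $M-1$, is at least $2$ because $v_{M-1}\geq 2$ and $v_M\geq 1$. Hence if there were two or more nontrivial factors the product would be at least $3\cdot 2=6$, so there is exactly one nontrivial factor, at index $M-1$, equal to $4$; that is, $t=M-1$. Solving $\binom{v_{M-1}+v_M-1}{v_M}=4$ with $v_{M-1}\geq 2$ and $v_M\geq 1$ (so the top is at least one more than the bottom) leaves only the solutions $\binom{4}{1}$ and $\binom{4}{3}$, giving $(v_{M-1},v_M)=(4,1)$ or $(v_{M-1},v_M)=(2,3)$.

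Finally I would show that both surviving configurations produce the complete graph $K_4$, not $K_{2,2}$. In each case the entries of $\xi(B)$ of value at most $M-2$ are forced into increasing order in the first $M-1$ positions, the first suffix slot must hold $M-1$, and the four rook equivalent boards arise by placing a single distinguished entry in one of four remaining positions: the unique $M$ when $(v_{M-1},v_M)=(4,1)$, and the unique leftover $M-1$ when $(v_{M-1},v_M)=(2,3)$. Any two of these four boards differ by a single transposition and are thus adjacent, exactly as in the proof of Theorem~\ref{completeg}, so $G(B)\cong K_4$. But $K_4$ has six edges and contains triangles, whereas $K_{2,2}$ has four edges and is triangle-free, a contradiction. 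The main obstacle is the middle step: establishing that a connected four-vertex equivalence class forces a single nontrivial binomial factor, and hence isolating the only two admissible count vectors. Once those are pinned down, verifying that each yields the complete graph $K_4$ is a short adaptation of the argument already used for Theorem~\ref{completeg}.
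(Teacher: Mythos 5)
Your proposal is correct and takes essentially the same route as the paper's own proof: both use Theorem~\ref{gjw3} to force an equivalence class of size four, invoke connectedness to constrain the counts $v_i$, reduce to the two possibilities $(v_{M-1},v_M)=(4,1)$ or $(2,3)$, and observe that each yields $K_4$ rather than $K_{2,2}$. The only cosmetic difference is how the factorization $4=2\cdot 2$ is excluded --- the paper argues that a factor of $2$ forces $v_{i+1}=1$ with $i+1$ the maximum value, leaving no room for a second factor, while you rule out any two nontrivial factors at once via the bound $3\cdot 2=6>4$; the substance is identical.
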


\begin{proof}
This proof proceeds by considering what boards could have the correct equivalence class size so that $K_{2,2}$ could be their rook equivalence graph. Since $K_{2,2}$ has four vertices, we need a board which has a rook equivalence class of size $4$. From Theorem~\ref{gjw3}, we know that we need $$4 = \prod_{i\geq 0}{v_i+v_{i+1}-1 \choose v_{i+1}}.$$ Furthermore, since $K_{2,2}$ is connected, we know that if $v_i = 1$, then either $i$ is the maximum value of elements in the root vector or $e_j=1$ for all $0\leq j \leq i$. For similar reasons, if $v_i \neq 1$, then $v_{i+1} \neq 1$ or $v_{i+1} = 1$ and $i+1$ is the maximum element of the root vector.

We can write $4$ as a product two ways, $4(\cdot 1)$ and $2 \cdot 2$. If ${v_i + v_{i+1} -1 \choose v_{i+1}}= 4$, then $v_i + v_{i+1} -1 = 4$ and $v_{i+1} \in \{1,4\}$. If $v_{i+1} = 1$, then $v_i = 4$, and we obtain a root vector looking like $\langle0,1,\ldots,i,i,i,i,i+1\rangle$. This root vector leads to a rook equivalence graph of $K_4$ by a reasoning similar to the proof of the previous theorem. Similarly, when $v_{i+1}=3$ and $v_i = 2$, the root vector has the form $\langle0,1,\ldots,i,i,(i+1),(i+1),(i+1)\rangle$. This root vector also leads to a rook equivalence graph isomorphic to $K_4$ since the second entry of value $i$ can be rearranged with the three entries of value $i+1$ freely.

This leaves us with the possibility that ${v_i + v_{i+1} -1 \choose v_{i+1}}= {v_{i+1} + v_{i+2} -1 \choose v_{i+2}}=2$. Note that we can conclude that the two instances have adjacent indexes because ${e_j + e_{j+1} -1 \choose e_{j+1}}=1$ implies $e_j = 1$, and we cannot have a value $j$ occurring once in our root vector between two other values that both occur more than once. However, ${v_i + v_{i+1} -1 \choose v_{i+1}}=2$ implies that $v_{i+1} = 1$ and $v_i = 2$, at which point $i+1$ must be the maximum value taken by elements of the root vector, in order for the rook equivalence graph to be connected. So there is no way that $v_{i+2}$ could be non-zero, and therefore no root vector of a board with a connected rook equivalence graph can fulfill  ${v_i + v_{i+1} -1 \choose v_{i+1}}= {v_{i+1} + v_{i+2} -1 \choose v_{i+2}}=2$. Thus there is no root vector that yields $K_{2,2}$ as its rook equivalence graph, as originally asserted.
\end{proof}

\begin{thm}
\label{no33}
$K_{3,3}$ is not the rook equivalence graph for any Ferrers board.
\end{thm}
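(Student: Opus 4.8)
The plan is to mirror the proof of Theorem~\ref{no22}: since $K_{3,3}$ is connected and has six vertices, I must locate every Ferrers board $B$ whose rook equivalence class has size $6$ and whose graph $G(B)$ is connected, and then show that none of them actually produces $K_{3,3}$. The engine is the combination of Theorem~\ref{gjw3} (class size) and Theorem~\ref{connected} (connectivity). First I would record the elementary observation that, for $0\leq i\leq M-1$, the factor $\binom{v_i+v_{i+1}-1}{v_{i+1}}$ exceeds $1$ exactly when $v_i\geq 2$ (and every factor with index $\geq M$ equals $1$). Combined with the connectivity criterion of Theorem~\ref{connected}---which forces the indices $i\leq M-1$ having $v_i\geq 2$ to form a block $\{t,t+1,\ldots,M-1\}$---this shows the class size is a product of exactly $s:=M-t$ nontrivial factors, namely $\prod_{i=t}^{M-1}\binom{v_i+v_{i+1}-1}{v_{i+1}}$, where $v_t,\ldots,v_{M-1}\geq 2$ and $v_M\geq 1$.

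Next I would cut down the cases by bounding $s$. If $s\geq 3$ then the factors indexed by $t$ and $t+1$ each involve two multiplicities that are $\geq 2$, so each is at least $\binom{3}{2}=3$ and the product is at least $9>6$; hence $s\leq 2$. The case $s=1$ amounts to solving $\binom{v_t+v_M-1}{v_M}=6$ with $v_t\geq 2$, $v_M\geq 1$, whose only solutions are $(v_t,v_M)\in\{(6,1),(3,2),(2,5)\}$. The two outer solutions each reduce to six boards distinguished by the position of a single movable entry among six interchangeable slots, so, exactly as in the proof of Theorem~\ref{completeg}, they give $K_6$. The middle solution $(3,2)$ leaves a free arrangement of two copies of each of two adjacent values, so the boards are indexed by the $\binom{4}{2}$ position sets of one value; by Theorem~\ref{rootswap} two boards are joined by an edge precisely when these sets meet in a single index, which yields the octahedron $K_{2,2,2}$, not $K_{3,3}$.

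The remaining, genuinely delicate, case is $s=2$, where the inequalities force the unique profile $v_t=v_{t+1}=2$, $v_{t+2}=1$. After a forced increasing prefix $0,1,\ldots,t$, the root vector then arranges the multiset $\{t,t+1,t+1,t+2\}$ in its last four positions. I would first verify, using condition (ii) of Theorem~\ref{gjw2} (no entry exceeds its left neighbor by more than $1$), that exactly six of these arrangements are legal root vectors, in agreement with Theorem~\ref{gjw3}, and then use Theorem~\ref{rootswap} to read off the edges as the pairs of arrangements differing by a single transposition of two distinct entries. Carrying out this bookkeeping produces a graph with only six edges and degree sequence $(3,2,2,2,2,1)$---concretely a five-cycle with one pendant vertex---which cannot be the $3$-regular, nine-edge graph $K_{3,3}$.

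Finally I would assemble the cases: every size-$6$ connected rook equivalence graph is $K_6$, the octahedron, or the five-cycle-with-pendant, none of which is $K_{3,3}$, so $K_{3,3}$ is not realized. The main obstacle is the $s=2$ profile $(2,2,1)$: unlike the others it is not one of the ``obvious'' graphs and must be resolved by an explicit enumeration of the six boards and their swap-adjacencies. It is worth flagging that this exceptional graph is itself triangle-free, so the cleanest invariant for eliminating it is its edge count (equivalently, its non-regular degree sequence) rather than the presence of a triangle; triangle-freeness alone does not separate it from $K_{3,3}$.
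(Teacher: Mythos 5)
Your proposal is correct, and it follows the same skeleton as the paper's proof: use Theorem~\ref{gjw3} together with connectivity to pin down the possible multiplicity profiles, arrive at exactly the candidates $(v_t,v_M)\in\{(6,1),(2,5),(3,2)\}$ and $(v_t,v_{t+1},v_M)=(2,2,1)$, and then rule each one out. The differences are in the bookkeeping and in how the last two cases are killed. Your organization is more systematic at the front end: invoking Theorem~\ref{connected} to see that the indices with $v_i\geq 2$ form a suffix block $\{t,\ldots,M-1\}$, and then bounding the number $s$ of nontrivial factors by $s\leq 2$ via $\binom{3}{2}\cdot\binom{3}{2}=9>6$, replaces the paper's more ad hoc argument that the factor $2$ forces $v_{i+1}=1,v_i=2$ and hence $v_{i+2}=0$. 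At the back end the two proofs diverge in the elimination criterion: the paper exhibits odd cycles (a triangle in the $(3,2)$ class, a $5$-cycle in the $(2,2,1)$ class) and concludes the graphs are not bipartite at all, whereas you identify the graphs outright --- the $(3,2)$ class is the octahedron $K_{2,2,2}$ (edges exactly when the two position-sets of the larger value meet in one index, which is correct), and the $(2,2,1)$ class is a $5$-cycle with a pendant vertex, eliminated by its edge count and degree sequence $(3,2,2,2,2,1)$. Both closings are valid; your computations check out (your six vectors and the $5$-cycle agree with the paper's list, with the pendant vertex $\langle 0,1,2,0,1\rangle$ attached to $\langle 0,1,2,1,0\rangle$), and your structural identifications are sharper. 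The paper's odd-cycle method has the advantage of pointing toward the broader non-bipartiteness conjecture in its final section, while your flag that the $(2,2,1)$ graph is triangle-free --- so that non-regularity or edge count, not triangles, is the right invariant there --- is exactly the subtlety that forces the paper to use a $5$-cycle rather than a triangle in that case.
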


\begin{proof}
The proof has a similar structure to the proof of Theorem~\ref{no22}. $K_{3,3}$ has $6$ vertices and we can write $6$ as a product two ways, $6$ and $2\cdot3$. If ${v_i + v_{i+1} -1 \choose v_{i+1}}=6$ then either $v_{i+1}=1$ and $v_i =6$, $v_{i+1} = 5$ and $v_{i} = 2$, or $v_{i+1}=2$ and $v_i = 3$. If the root vector satisfies either $v_{i+1}=1$ and $v_i =6$ or $v_{i+1} = 5$ and $v_{i} = 2$, the rook equivalence graph is $K_6$ as in the previous proof.

If $v_i = 3$ and $v_{i+1} = 2$, we get a root vector of the form $\langle0,0,0,1,1\rangle$. By inspection, this root vector leads to an equivalence class also containing $\langle0,0,1,0,1\rangle, \langle0,1,0,0,1\rangle, \langle0,1,1,0,0\rangle, \langle0,1,0,1,0\rangle$, and $\langle0,0,1,1,0\rangle$. However, any pair of the first three vertices listed are connected by an edge. This prevents the graph from being bipartite, because $\langle0,0,0,1,1\rangle$ and $\langle0,0,1,0,1\rangle$ are connected by an edge, so one must be in $V_1$ and the other in $V_2$. But then, since $\langle0,1,0,0,1\rangle$ is connected by an edge to each of them, it cannot be in either $V_1$ or $V_2$.

Lastly, we consider that $6 = 3\cdot 2$. As in the proof of the previous theorem, we saw that if  ${v_i + v_{i+1} -1 \choose v_{i+1}}=2$ it implies that $v_{i+1} = 1$ and $v_i = 2$, so $v_{i+2}$ must equal zero. However, if  ${v_i + v_{i+1} -1 \choose v_{i+1}}=3$ we have two options. If $v_{i+1} = 1$ and $v_i = 3$, we once again cannot have $v_{i+2} \neq 0$. On the other hand, if $v_{i+1} = 2$ and $v_i = 2$, then we can satisfy ${v_{i+1} + v_{i+2} -1 \choose v_{i+2}}=2$ by letting $v_{i+2} = 1$. This gives a root vector of the form $\langle0,0,1,1,2\rangle$.

If $\xi(B) = \langle0,0,1,1,2\rangle$ then the root vectors of the other boards in the rook equivalence class are $\langle0,1,0,1,2\rangle, \langle0,1,2,1,0\rangle, \langle0,1,1,2,0\rangle, \langle0,0,1,2,1\rangle,$ and $\langle0,1,2,0,1\rangle$. Unfortunately, the first five vectors listed form a cycle, so if the first vector is in $V_1$, the second must be in $V_2$, forcing the third to be back in $V_1$, and the fourth in $V_2$. Now the fifth vector is connected to the fourth, so it cannot be in $V_2$, but it also connects back to the first, so it cannot be in $V_1$. Ergo, this root vector does not lead to a rook equivalence graph that is $K_{3,3}$ although it is interesting and pictured in Figure~\ref{bothnot33}. Thus none of our possibilities lead to a graph that is $K_{3,3}$, so $K_{3,3}$ is not the rook equivalence graph for any rook equivalence class.
\end{proof}

\bfi
\begin{tikzpicture}
  [scale=.8,auto=left,every node/.style={circle,draw=black}]
  \node (n1) at (2,4)  {1};
  \node (n2) at (4,3)  {2};
  \node (n3) at (4,1) {3};
  \node (n4) at (2,0)  {4};
  \node (n5) at (0,1)  {5};
  \node (n6) at (0,3)  {6};

  \foreach \from/\to in {n1/n2,n1/n3,n1/n5,n1/n6,n2/n3,n2/n4,n2/n6,n3/n4,n3/n5,n4/n5,n4/n6,n5/n6}
    \draw (\from) -- (\to);

\end{tikzpicture}
\hs{30pt}
\begin{tikzpicture}
  [scale=.8,auto=left,every node/.style={circle,draw=black}]
  \node (n1) at (1.75,4)  {5};
  \node (n2) at (3.5,2.5)  {1};
  \node (n3) at (3.5,0) {2};
  \node (n4) at (1.75,2)  {6};
  \node (n5) at (0,0)  {3};
  \node (n6) at (0,2.5)  {4};

  \foreach \from/\to in {n1/n4,n1/n2,n1/n6,n2/n3,n3/n5,n5/n6}
    \draw (\from) -- (\to);

\end{tikzpicture}
\capt{\label{bothnot33} The graph on the left is the rook equivalence graph for the equivalence class containing $B_1$ with $\xi(B_1) = \langle0,0,0,1,1\rangle$, notice that you can make a cycle containing $3$ vertices. The graph on the right comes from $B_2$ with $\xi(B_2) = \langle0,0,1,1,2\rangle$, notice that you can make a cycle containing $5$ vertices. The vertices of each graph are numbered in the order that the root vectors were listed in the proofs.}
\efi

\section{$m$-Level Rook Placements}
\label{sec:mrp}

Briggs and Remmel defined a generalization of rook placements called $m$-level rook placements~\cite{br:mrn}. This section introduces this generalization, and provides some important results that we will use to define and work with an $m$-rook equivalence graph.

As before, we will restrict our consideration to Ferrers boards. Given a board $B$ and a fixed integer $m>0$, we partition the rows of $B$ into sets of size $m$ called \emph{levels}, where the first level contains the bottom $m$ rows, the second level contains rows $m+1$ through $2m$, and so on until every row of $B$ is in a level. Note that the top level may contain some rows which do not contain squares of $B$.

An \emph{$m$-level rook placement of $k$ rooks} on $B$ is a subset of $B$ containing $k$ squares, no two of which are in the same level or column. Figure~\ref{mrook} has an example of a $2$-level rook placement of $3$ rooks on a board. Notice that an $m$-level rook placement replaces the role of a row with that of a level, which is a set of rows. Clearly then every $m$-level rook placement is also a rook placement, and a rook placement is equivalent to a $1$-level rook placement. The \emph{$k$th $m$-level rook number} of $B$, denoted $r_{k,m}(B)$, is the number of $m$-level rook placements of $k$ rooks on $B$. As before, we say two boards are \emph{$m$-level rook equivalent} if they have the same $m$-level rook numbers for all $k\geq0$.

\bfi
\begin{tikzpicture}
\draw(-.5,1) node {$B=$};
\foreach \x in {0} 
   \draw (\x,0)--(\x,.5);
\foreach \x in {.5} 
   \draw (\x,0)--(\x,1);
\foreach \x in {1,1.5}
    \draw (\x,0)--(\x,1.5);
\foreach \x in {2,2.5}
   \draw (\x,0)--(\x,2.5);
\foreach \y in {0,.5}
	\draw (0,\y)--(2.5,\y);
\foreach \y in {1}
        \draw (.5,\y)--(2.5,\y);
\draw (1,1.5)--(2.5,1.5);
\foreach \y in {2,2.5}
          \draw(2,\y)--(2.5,\y);
\draw (.75, .75) node{$R$};
\draw (2.25, 2.25) node{$R$};
\draw (1.25, 1.25) node{$R$};
\foreach \y in {1,2}
   \draw[thick,dashed] (0,\y)--(2.5,\y);
\end{tikzpicture}
\capt{\label{mrook} A $2$-level rook placement of $3$ rooks on $B= (1,2,3,3,5)$. Thick dashed lines are divisions between levels. Notice that no further rooks could be placed on this board, since each level already contains a rook.}
\efi

Briggs and Remmel also defined an $m$ analogue of the rook polynomial as follows. In order to define this polynomial, we need to define the \emph{$k$th $m$-falling factorial} of $x$, defined by $x\da_{k,m} = \prod_{i=0}^{k-1} x-im$. Note that this is a generalization of the $k$th falling factorial and that $\{x\da_{k,m}\mid k\geq0\}$ is another basis for the vector space of polynomials. Given this definition, we can define the \emph{$m$-level rook polynomial} of board $B$ with $n$ columns by $$p_m(B,x) = \sum_{k=0}^nr_{k,m}(B)x\da_{n-k,m}.$$ This is the generating function for the $m$-level rook numbers of $B$ in the $m$-falling factorial basis of polynomials.

In order to factor the $m$-level rook polynomial more cleanly, Briggs and Remmel restricted their attention to a nice subset of Ferrers boards, called singleton boards. To define these boards, we need to first define the \emph{$m$-floor of $o$}, which we write $\lfloor o \rfloor_m$. The $m$-floor of $o$ is the largest multiple of $m$ less than or equal to $o$.  Similarly, define the \emph{$m$-ceiling of $o$} to be the smallest multiple of $m$ greater than or equal to $o$, written $\lceil o \rceil_m$. A board $B = (b_1, b_2,\ldots,b_n)$ is singleton if $b_i -\lfloor b_i \rfloor_m \neq 0$ implies that $\lfloor b_i \rfloor_m < \lfloor b_{i+1} \rfloor_m$. Alternatively, this means that for each level there is at most a single column of $B$ that intersects that level in more than one cell but less than the full $m$ cells, thus the term ``singleton''. Whether a given board is singleton will depend on the value of $m$ being considered, and when $m=1$ every board is singleton. Figure~\ref{singleton} gives an example of a singleton and a non-singleton board when $m =2$.

\bfi
\begin{tikzpicture}
\draw(-.5,1) node {$B_1=$};
\foreach \x in {0} 
   \draw (\x,0)--(\x,.5);
\foreach \x in {.5} 
   \draw (\x,0)--(\x,1);
\foreach \x in {1,1.5}
    \draw (\x,0)--(\x,1.5);
\foreach \x in {2,2.5}
   \draw (\x,0)--(\x,2.5);
\foreach \y in {0,.5}
	\draw (0,\y)--(2.5,\y);
\foreach \y in {1}
        \draw (.5,\y)--(2.5,\y);
\draw (1,1.5)--(2.5,1.5);
\foreach \y in {2,2.5}
          \draw(2,\y)--(2.5,\y);
\foreach \y in {1,2}
   \draw[thick,dashed] (0,\y)--(2.5,\y);
\end{tikzpicture}
\hs{40pt}
\begin{tikzpicture}
\draw(-.5,1) node {$B_2=$};
\foreach \x in {0} 
   \draw (\x,0)--(\x,.5);
\foreach \x in {.5,1} 
   \draw (\x,0)--(\x,1);
\foreach \x in {1.5}
    \draw (\x,0)--(\x,2);
\foreach \x in {2,2.5}
   \draw (\x,0)--(\x,2.5);
\foreach \y in {0,.5}
	\draw (0,\y)--(2.5,\y);
\foreach \y in {1}
        \draw (.5,\y)--(2.5,\y);
\foreach \y in {1.5,2}
     \draw (1.5,\y)--(2.5,\y);
\foreach \y in {2.5}
          \draw(2,\y)--(2.5,\y);
\foreach \y in {1,2}
   \draw[thick,dashed] (0,\y)--(2.5,\y);
\end{tikzpicture}
\capt{\label{singleton} $B_1$ is not singleton, because both columns $3$ and $4$ intersect the $2$nd level at some, but not all, squares. $B_2$ is a singleton board.}
\efi

Briggs and Remmel gave the following factorization theorem for the $m$-level rook polynomial of a singleton board.

\begin{thm}[Briggs-Remmel~\cite{br:mrn}]
\label{br factor}
If $B=(b_1,\ldots,b_n)$ is a singleton board, then
$$
p_m(B,x) = \prod_{i=1}^n(x+b_i-m(i-1)).
$$
\end{thm}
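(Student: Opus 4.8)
The plan is to adapt the augmented-board counting argument underlying the Goldman--Joichi--White factorization (Theorem~\ref{gjw1}) to the $m$-level setting, with the singleton hypothesis playing exactly the role needed to keep the bookkeeping uniform. For a nonnegative integer $t$, I would form the augmented board $B_t$ by stacking $t$ complete levels (each of height $m$ and full width $n$) beneath $B$, so that column $i$ of $B_t$ has $mt+b_i$ cells. Let $N(t)$ denote the number of $m$-level placements of exactly $n$ rooks on $B_t$ that use every column once (equivalently, one rook per column, all in distinct levels). The whole argument reduces to computing $N(t)$ in two ways and invoking that two degree-$n$ polynomials agreeing on the infinite set $\{mt : t \in \mathbb{Z}_{\geq 0}\}$ must coincide.

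First I would show $N(t) = p_m(B,mt)$. Splitting a placement according to the number $k$ of rooks that land in the original part $B$ (the remaining $n-k$ rooks lying in the $t$ appended levels), the rooks inside $B$ range over all $m$-level placements of $k$ rooks counted by $r_{k,m}(B)$, while the $n-k$ rooks in the appended block occupy $n-k$ of the remaining columns in distinct levels; there are $m^{n-k}\,t\da_{n-k}$ ways to do the latter (choose an injection of these columns into the $t$ full levels, then an $m$-cell within each chosen level). Since appended levels and levels of $B$ are disjoint, the two groups never conflict, so $N(t) = \sum_k r_{k,m}(B)\,m^{n-k}\,t\da_{n-k}$, which is exactly $\sum_k r_{k,m}(B)\,(mt)\da_{n-k,m} = p_m(B,mt)$.

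Next I would compute $N(t)$ by filling the columns in order of increasing height $b_1 \leq \cdots \leq b_n$. The key claim, and the one place the singleton hypothesis is essential, is that when the rook of column $i$ is placed, each of the $i-1$ previously occupied levels meets column $i$ in exactly $m$ cells, so precisely $m(i-1)$ cells of column $i$ are forbidden and $mt+b_i-m(i-1)$ remain. A forbidden level lying in the appended block obviously meets column $i$ in $m$ cells; the only danger is a level $s$ of $B$ holding the rook of some earlier (shorter) column $j<i$ that column $i$ meets only partially. I would rule this out by showing that in that situation both $b_j$ and $b_i$ lie strictly between $(s-1)m$ and $sm$, forcing $\lfloor b_j\rfloor_m = \lfloor b_i\rfloor_m$ with $b_j$ not a multiple of $m$; since the heights are nondecreasing and $j<i$, this contradicts the singleton condition $\lfloor b_j\rfloor_m < \lfloor b_{j+1}\rfloor_m$. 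Hence $N(t) = \prod_{i=1}^n (mt+b_i-m(i-1))$.

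Finally, equating the two evaluations gives $p_m(B,mt) = \prod_{i=1}^n (mt+b_i-m(i-1))$ for every nonnegative integer $t$. Both sides are polynomials of degree $n$ in the evaluation variable and they agree at the infinitely many points $mt$, so $p_m(B,x) = \prod_{i=1}^n (x+b_i-m(i-1))$ as claimed. I expect the single genuine obstacle to be the partial-level claim in the third paragraph: everything else is a straightforward $m$-level echo of the classical argument, whereas that step is precisely where ``singleton'' is used and must be handled with care about the boundary multiples of $m$.
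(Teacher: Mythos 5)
This theorem is stated in the paper as a cited result of Briggs and Remmel and is given no proof there, so there is nothing internal to compare against; your argument is correct and is essentially the original/standard one, adapting the Goldman--Joichi--White augmented-board count (Theorem~\ref{gjw1}) to the $m$-level setting by counting full placements on $B$ with $t$ complete levels appended in two ways and comparing polynomials. In particular, the one delicate step is handled correctly: if a level $s$ occupied by the rook of an earlier column $j<i$ met column $i$ only partially, then $(s-1)m < b_j \leq b_i < sm$, so $b_j$ is not a multiple of $m$ while $\lfloor b_j \rfloor_m < \lfloor b_{j+1} \rfloor_m \leq \lfloor b_i \rfloor_m = \lfloor b_j \rfloor_m$ gives a contradiction with the singleton condition, which is exactly where that hypothesis is needed.
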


Notice that Theorem~\ref{gjw1} is a special case of Theorem~\ref{br factor} when $m=1$. This factorization theorem gives rise to an \emph{$m$-level root vector} for singleton board $B=(b_1,\ldots,b_n)$, defined by $\xi_m(B)=\langle0-b_1,m-b_2,\ldots,m(n-1)-b_n\rangle$. The elements of the $m$-level root vector are once again the zeros of the $m$-level rook polynomial of $B$. Barrese, Loehr, Remmel, and Sagan gave criteria for when a given vector is an $m$-level root vector for some singleton board.

\begin{thm}[\cite{blrs:mrp}]
\label{mvector}
A vector $\xi = \langle\xi_1,\xi_2,\ldots,\xi_n\rangle$ is the $m$-level root vector of some singleton board if and only if
\ben
   \item[(i)] $\xi_1 \leq 0$
   \item[(ii)] $\xi_i +m \geq \xi_{i+1} $
   \item[(iii)] If neither $\xi_i$ nor $\xi_{i+1}$ are multiples of $m$ then $\lfloor \xi_i \rfloor_m \geq \lfloor \xi_{i+1} \rfloor_m$.
\een
\end{thm}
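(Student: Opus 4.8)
The plan is to exploit the invertible correspondence between candidate vectors and boards supplied by the definition of the $m$-level root vector following Theorem~\ref{br factor}: since $\xi_m(B)_i = m(i-1)-b_i$, any vector $\xi=\langle\xi_1,\ldots,\xi_n\rangle$ determines a unique candidate board $B=(b_1,\ldots,b_n)$ via $b_i = m(i-1)-\xi_i$, and $\xi$ is the $m$-level root vector of some singleton board exactly when this reconstructed $B$ is itself a singleton Ferrers board. Thus the theorem reduces to checking that $B$ is a singleton Ferrers board if and only if (i), (ii), and (iii) hold. This mirrors the proof of the $m=1$ case, Theorem~\ref{gjw2}; indeed when $m=1$ condition (iii) is vacuous and (i),(ii) collapse to the conditions there.

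First I would dispatch the Ferrers conditions. The board $B$ is a Ferrers board precisely when $b_1\geq 0$ and the heights are weakly increasing, $b_i\leq b_{i+1}$. Substituting $b_i=m(i-1)-\xi_i$ turns $b_1\geq0$ into $\xi_1\leq0$, which is (i), and turns $b_i\leq b_{i+1}$ into $\xi_{i+1}\leq\xi_i+m$, which is (ii). Moreover (i) together with (ii) forces every $b_i\geq b_1\geq0$, so (i) and (ii) hold if and only if $B$ is a genuine Ferrers board.

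It remains to show that, assuming (i) and (ii), the singleton property is equivalent to (iii). The key computation is the identity $\lfloor b_i\rfloor_m = m(i-1)-\lceil\xi_i\rceil_m$, which follows because the $m$-floor commutes with adding the multiple $m(i-1)$ and because $\lfloor -x\rfloor_m=-\lceil x\rceil_m$. Since $m(i-1)$ is a multiple of $m$, the value $b_i$ is a non-multiple of $m$ if and only if $\xi_i$ is, so the singleton hypothesis $b_i-\lfloor b_i\rfloor_m\neq0$ is triggered exactly when $\xi_i$ is a non-multiple of $m$. Feeding the identity into the singleton conclusion $\lfloor b_i\rfloor_m<\lfloor b_{i+1}\rfloor_m$ rewrites it, after cancelling the $m(i-1)$ terms, as $\lceil\xi_{i+1}\rceil_m\leq\lceil\xi_i\rceil_m$.

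The subtle step, and the one I expect to be the main obstacle, is reconciling this ceiling inequality with the floor inequality in (iii), which is only asserted when \emph{both} $\xi_i$ and $\xi_{i+1}$ are non-multiples of $m$. I would split on whether $\xi_{i+1}$ is a multiple of $m$. When both $\xi_i,\xi_{i+1}$ are non-multiples, $\lceil\xi\rceil_m=\lfloor\xi\rfloor_m+m$ for each, so $\lceil\xi_{i+1}\rceil_m\leq\lceil\xi_i\rceil_m$ is literally equivalent to $\lfloor\xi_i\rfloor_m\geq\lfloor\xi_{i+1}\rfloor_m$, which is exactly (iii). When $\xi_i$ is a non-multiple but $\xi_{i+1}$ is a multiple of $m$, condition (iii) is silent, so I must show the required inequality holds for free: from (ii), $\xi_{i+1}\leq\xi_i+m<\lfloor\xi_i\rfloor_m+2m=\lceil\xi_i\rceil_m+m$, and since $\xi_{i+1}$ is a multiple of $m$ this forces $\xi_{i+1}\leq\lceil\xi_i\rceil_m$, exactly the needed inequality. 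Finally, when $\xi_i$ is a multiple of $m$ the singleton condition imposes nothing and there is nothing to check. Assembling the three cases shows the singleton property is equivalent to (iii) given (i) and (ii), completing the reduction and hence the theorem.
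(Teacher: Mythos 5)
The paper itself contains no proof of this statement: Theorem~\ref{mvector} is imported as background from \cite{blrs:mrp}, so there is no in-paper argument to compare yours against; I can only judge your proposal on its own merits, and it is correct. Your reduction is the right one: since $\xi_i = m(i-1)-b_i$ forces $b_i = m(i-1)-\xi_i$, the theorem is precisely the claim that this reconstructed board is a singleton Ferrers board if and only if (i)--(iii) hold. The translation of the Ferrers conditions into (i) and (ii) is immediate, and the heart of the matter is handled correctly: using $\lfloor m(i-1)-\xi_i\rfloor_m = m(i-1)-\lceil \xi_i\rceil_m$, the singleton requirement ``$b_i - \lfloor b_i\rfloor_m \neq 0$ implies $\lfloor b_i\rfloor_m < \lfloor b_{i+1}\rfloor_m$'' becomes ``$\xi_i$ not a multiple of $m$ implies $\lceil \xi_{i+1}\rceil_m \leq \lceil \xi_i\rceil_m$,'' where the passage from a strict inequality between multiples of $m$ to a weak inequality shifted by $m$ is legitimate. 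The one point where a reader might balk is the apparent mismatch between this condition (which bites whenever $\xi_i$ alone is a non-multiple) and condition (iii) (which speaks only when \emph{both} entries are non-multiples); your Case B, deriving $\xi_{i+1}\leq\lceil\xi_i\rceil_m$ from (ii) when $\xi_{i+1}$ is a multiple of $m$, closes exactly that gap, and your three cases are exhaustive, so both directions of the equivalence go through. This is presumably the same style of elementary argument given in \cite{blrs:mrp} (and the natural $m$-analogue of how one would prove Theorem~\ref{gjw2}), but the present paper gives no proof to compare against beyond that.
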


It is worth noting that the condition requiring that $\xi_{i+1}$ be at most $m$ more than $\xi_i$ means that, if $\xi_i$ is a multiple of $m$ and $\lfloor \xi_i \rfloor_m < \lfloor \xi_{i+1} \rfloor_m$, which is allowed under condition (iii) in this case, then $\xi_{i+1}$ must also be a multiple of $m$. Specifically $\xi_{i+1} = \xi_i +m$. Thus increasing the value of $\xi_i$ cannot cause condition (iii) to cease to hold.

The next theorem, from the same paper, extends the factorization theorem to all Ferrers boards by considering the number of squares in each level, rather than each column. Suppose board $B$ has  non-empty intersection with level $n$, but empty intersection with level $n+1$. Then we define $l_j$, the \emph{$j$-th level number} of $B$, to be the number of cells of $B$ that intersect the $j$th level from the top, level $n+1-j$. Using this definition yields another factorization theorem.

\begin{thm}[\cite{blrs:mrp}]
\label{level factor}
If $B=(b_1,\ldots,b_n)$ is a Ferrers board, then
$$
p_m(B,x) = \prod_{j=1}^n (x+l_j-m(j-1)).
$$
\end{thm}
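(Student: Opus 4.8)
The plan is to generalize the counting proof that Goldman, Joichi, and White gave for Theorem~\ref{gjw1}, replacing columns by levels throughout. First I would fix the convention (as the statement implicitly requires) that $B$ has been padded with enough columns of height zero on the left that the number of columns equals the number $n$ of levels under consideration, so that every cell of $B$ lies in levels $1$ through $n$ (with the topmost levels possibly empty, and $b_n \le mn$) and $p_m(B,x)$ has degree $n$. It then suffices to prove the asserted identity for every $x$ of the form $x = mc$ with $c$ a positive integer, since these are infinitely many values and both sides are polynomials in $x$. The augmented board I would use is $\bar B$, obtained by adjoining $c$ columns of height $mn$ on the right of $B$; these extra columns are full, meeting every one of the $n$ levels in all $m$ of its cells, and since $mn \ge b_n$ the board $\bar B$ is again Ferrers. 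I would count the $m$-level placements of exactly $n$ rooks on $\bar B$ (which necessarily use one rook in each level and one in each occupied column) in two different ways.

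For the first count I would sort placements by the number $k$ of rooks that land on cells of the original board $B$. Those $k$ rooks form an $m$-level placement of $B$, counted by $r_{k,m}(B)$, and occupy $k$ of the $n$ levels; the remaining $n-k$ levels must be filled by rooks in the extra full columns. Filling those levels one at a time, the number of available cells drops by $m$ each time an extra column is used, so the extra part contributes $mc\,(mc-m)\cdots(mc-(n-k-1)m) = x\da_{n-k,m}$, independently of which $n-k$ levels remain (because the extra columns are full in every level). Summing over $k$ gives $\sum_k r_{k,m}(B)\,x\da_{n-k,m} = p_m(B,x)$, the left-hand side.

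For the second count I would instead process the levels from the top down, placing the rook of the $j$-th level from the top at each stage. The $j$-th level contains $l_j + mc = x + l_j$ cells in total, from which I must subtract the cells lying in columns already occupied by the $j-1$ rooks placed in the higher levels. Here is the crux of the argument: a rook placed in a higher level sits in a column that reaches that higher level, so by the Ferrers (nesting) condition that column is full in every lower level and in particular meets the current level in all $m$ of its cells. Hence each of the $j-1$ earlier rooks blocks exactly $m$ cells of the current level, regardless of which columns they occupy, leaving $x + l_j - m(j-1)$ choices for the $j$-th rook. Multiplying these choices over $j = 1, \dots, n$ yields $\prod_{j=1}^n (x + l_j - m(j-1))$, the right-hand side.

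Since the two counts agree for every $x = mc$, the identity holds at infinitely many points and therefore as a polynomial identity. I expect the main obstacle to be precisely the key observation in the second count, namely that each previously placed rook removes exactly $m$ cells from the current level: this is the only place the Ferrers hypothesis is used, and it is what forces the processing order to be top-to-bottom rather than the reverse. It also explains the padding convention, since the argument needs every cell of $B$ to lie in levels $1$ through $n$ so that the combined board has exactly $n$ levels. A secondary point I would verify carefully is that the extra-column contribution really is $x\da_{n-k,m}$ and is independent of which levels the original rooks occupy, which again follows from the extra columns being full.
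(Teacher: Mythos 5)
This theorem is stated in the paper as an imported result from \cite{blrs:mrp} with no proof given, so there is no internal argument to compare against; judged on its own, your proof is correct, and it is essentially the standard one. Your double count --- evaluating at $x=mc$, appending $c$ full columns of height $mn$ (legitimately Ferrers since $b_n\leq mn$), counting $n$-rook $m$-level placements once by the number $k$ of rooks on $B$ to get $\sum_k r_{k,m}(B)\,x\da_{n-k,m}$, and once level-by-level from the top using the key fact that any column reaching a higher level is full in every lower level, so each earlier rook blocks exactly $m$ cells --- is precisely the Goldman--Joichi--White strategy behind Theorem~\ref{gjw1}, transposed from columns to levels, which is how the cited source proves it.
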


One unfortunate lack of this factorization theorem is that, unlike Theorem~\ref{gjw1} and Theorem~\ref{br factor}, we cannot reconstruct the exact shape of the board uniquely $B$ from the roots of its $m$-level rook polynomial.

\section{Graph Theory of $m$-level Rook Placements}
\label{sec:mgtrp}

We will develop an $m$-level rook equivalence graph in this section. The results are generalizations of the results in Section~\ref{sec:gtrp}.

In this section we restrict our attention to singleton boards. Pragmatically this makes sense as it allows us to use the ($m$-level) root vector as before. Additionally, it does not seem likely that considering all Ferrers boards would introduce interesting complexity, since Theorem~\ref{level factor} implies that any Ferrers board can be transformed into a singleton board by moving squares from one column to another. Indeed, any sort of column rearranging can be done as long as it doesn't alter the level numbers of the board.

We define the \emph{$m$-level rook equivalence graph} of the $m$-level rook equivalence class containing singleton board $B$ as follows. The vertex set consists of all singleton boards $m$-level rook equivalent to $B$. As before, two singleton boards are connected by an edge if one can be transformed into the other by moving some number of squares from one column to another. Let $G_m(B)$ denote the $m$-level rook equivalence graph of the equivalence class containing singleton board $B$. Once again, we quickly replace the geometric notation with one focused on the $m$-level root vector.

\begin{thm}
\label{mrootswap}
If singleton boards $B_1 \equiv B_2$, both containing $n$ columns, are such that $\{B_1,B_2\}\in E$, then the $m$-level root vectors $\xi_m(B_1)$ and $\xi_m(B_2)$ differ only in two positions, which are swapped. That is, if $\xi_m(B_1) = \langle z_1,\ldots,z_{i_1},\ldots,z_{i_2},\ldots,z_n\rangle$ then $\xi_m(B_2)=\langle z_1,\ldots,z_{i_2},\ldots,z_{i_1},\ldots,z_n\rangle$ for some $1\leq i_1 \langle i_2 \leq n$ where $z_{i_1} \neq z_{i_2}$.
\end{thm}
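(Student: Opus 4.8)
The plan is to mirror the proof of Theorem~\ref{rootswap}, replacing the ordinary root-vector entry $(i-1)-b_i$ by the $m$-level entry $m(i-1)-b_i$ supplied by Theorem~\ref{br factor}. As in that argument, I would first invoke the geometric definition of an edge: since $\{B_1,B_2\}\in E$, the two singleton boards differ in exactly two columns, with some positive number $k$ of squares moved out of one column and into another. Fixing notation so that the migrating squares lie further left in $B_1$, I can write $B_1=(b_1,\ldots,b_{i_1},\ldots,b_{i_2},\ldots,b_n)$ and $B_2=(b_1,\ldots,b_{i_1}-k,\ldots,b_{i_2}+k,\ldots,b_n)$ for some $i_1<i_2$ and $k>0$, with all remaining columns identical.

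Next I would compute the two $m$-level root vectors directly from the formula $\xi_m(B)_i=m(i-1)-b_i$. The vectors $\xi_m(B_1)$ and $\xi_m(B_2)$ then agree outside positions $i_1$ and $i_2$, while in those two positions the entries of $\xi_m(B_2)$ are $m(i_1-1)-b_{i_1}+k$ and $m(i_2-1)-b_{i_2}-k$, respectively. So the two $m$-level root vectors differ in \emph{at most} the coordinates $i_1$ and $i_2$.

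The crucial input is that $m$-level rook equivalence of singleton boards corresponds to rearrangement of the $m$-level root vector. This is the $m$-level analogue of Corollary~\ref{rearrange}, and it follows from Theorem~\ref{br factor}: since both $B_1$ and $B_2$ are singleton, each $m$-level rook polynomial factors completely into linear terms whose roots are exactly the entries of the corresponding $m$-level root vector, so equality of the polynomials (after padding to equal column counts) forces the two root vectors to carry the same multiset of entries. Hence $\xi_m(B_2)$ is a rearrangement of $\xi_m(B_1)$. Combining this with the previous paragraph, the only way two vectors differing in at most two coordinates can be rearrangements of one another is for those two coordinates to carry swapped values; explicitly, $m(i_1-1)-b_{i_1}=m(i_2-1)-b_{i_2}-k$ and $m(i_2-1)-b_{i_2}=m(i_1-1)-b_{i_1}+k$. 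Since $k>0$ the two entries are distinct, giving $z_{i_1}\neq z_{i_2}$ and completing the argument.

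The main (and only minor) obstacle is bookkeeping: one must record that both $B_1$ and $B_2$ are singleton — which is exactly the hypothesis that they are vertices of $G_m(B)$ — so that Theorem~\ref{br factor} applies to each board and the rearrangement characterization is legitimate. Beyond this, no new idea past the $m=1$ case is needed; the entire generalization is driven by substituting $m(i-1)$ for $(i-1)$ in the root-vector entries, after which the arithmetic closes in the same way.
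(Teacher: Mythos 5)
Your proposal is correct and matches the paper's proof, which simply states that the argument of Theorem~\ref{rootswap} carries over verbatim with $(i-1)-b_i$ replaced by $m(i-1)-b_i$. Your additional care in deriving the $m$-level analogue of Corollary~\ref{rearrange} from Theorem~\ref{br factor} (using that both boards are singleton) is exactly the bookkeeping the paper leaves implicit.
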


\begin{proof}
The proof is exactly the same as that of Theorem~\ref{rootswap}, except the $i$th element of the $m$-level root vector is $(i-1)m-b_i$ instead of $(i-1)-b_i$ as in the ordinary root vector.
\end{proof}

\begin{lem}
\label{mchecktwice}
Suppose $\xi_m(B)=\langle z_1,\cdot,z_n\rangle$ is the $m$-level root vector for some singleton board. Let $\xi'$ be equivalent to $\xi(B)$ except in positions $1<i_1$ and $1<i_2$ which are swapped, and further assume that $z_{i_1}>z_{i_2}$. If $z_{i_2-1} \geq z_{i_1}$ and $z_{i_2} \geq z_{i_1+1}$ then $\xi'$ is the $m$-level root vector for some singleton board.
\end{lem}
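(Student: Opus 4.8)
The plan is to follow the proof of Lemma~\ref{checktwice} almost verbatim, with the one essential change that a candidate vector is now an $m$-level root vector precisely when it satisfies the three conditions of Theorem~\ref{mvector} rather than the two of Theorem~\ref{gjw2}. Since position $1$ and every position we leave alone are untouched, condition (i) of Theorem~\ref{mvector} is immediate, so the whole argument reduces to verifying conditions (ii) and (iii) for the neighbor pairs that actually change. Adopting the convention $i_1 < i_2$ with $z_{i_1} > z_{i_2}$, the value at position $i_1$ decreases to $z_{i_2}$ while the value at position $i_2$ increases to $z_{i_1}$, so only the four pairs $(i_1-1,i_1)$, $(i_1,i_1+1)$, $(i_2-1,i_2)$ and $(i_2,i_2+1)$ can fail, exactly as in Lemma~\ref{checktwice}.

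First I would dispatch the two pairs named in the hypotheses. After the swap, positions $(i_1,i_1+1)$ hold $z_{i_2}$ and $z_{i_1+1}$, and positions $(i_2-1,i_2)$ hold $z_{i_2-1}$ and $z_{i_1}$. The hypotheses $z_{i_2} \geq z_{i_1+1}$ and $z_{i_2-1} \geq z_{i_1}$ give condition (ii) at once, since $a \geq b$ implies $a+m \geq b$. They also secure condition (iii) for free: the $m$-floor is weakly monotone, so $z_{i_2} \geq z_{i_1+1}$ forces $\lfloor z_{i_2}\rfloor_m \geq \lfloor z_{i_1+1}\rfloor_m$ and likewise for the other pair, whether or not the entries are multiples of $m$. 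This is exactly why the hypotheses drop the ``$+m$'' that a naive generalization of Lemma~\ref{checktwice} would carry: the stronger inequality is precisely what is needed to obtain condition (iii) alongside condition (ii).

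Next I would handle the two pairs not named in the hypotheses, using that $\xi_m(B)$ is already an $m$-level root vector. For the pair $(i_2,i_2+1)$, whose left entry increased from $z_{i_2}$ to $z_{i_1}$, condition (ii) follows as in the ordinary proof from $z_{i_2}+m \geq z_{i_2+1}$ together with $z_{i_1} > z_{i_2}$, and condition (iii) cannot be broken because, as noted after Theorem~\ref{mvector}, raising the left entry of a pair can only raise its $m$-floor or turn it into a multiple of $m$, neither of which violates (iii). The pair $(i_1-1,i_1)$ is the remaining case and the main obstacle: here the right entry \emph{decreased} from $z_{i_1}$ to $z_{i_2}$, and lowering a right entry can genuinely switch condition (iii) on, namely when the original entry $z_{i_1}$ was a multiple of $m$ (so the old pair satisfied (iii) vacuously) while $z_{i_2}$ is not.

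I expect to resolve this last case with a short split, using the elementary fact that for a multiple of $m$, say $c$, one has $\lfloor a\rfloor_m \geq c$ if and only if $a \geq c$. Condition (ii) for $(i_1-1,i_1)$ is routine, as $z_{i_1-1}+m \geq z_{i_1} > z_{i_2}$. For condition (iii), assuming neither $z_{i_1-1}$ nor $z_{i_2}$ is a multiple of $m$, it suffices to show $z_{i_1-1} \geq \lfloor z_{i_2}\rfloor_m$. If $z_{i_1}$ is a multiple of $m$, then $z_{i_2} < z_{i_1}$ with $z_{i_2}$ not a multiple forces $\lfloor z_{i_2}\rfloor_m \leq z_{i_1}-m \leq z_{i_1-1}$, the last step from $z_{i_1-1}+m \geq z_{i_1}$. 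If $z_{i_1}$ is not a multiple of $m$, then condition (iii) for the original pair gives $\lfloor z_{i_1-1}\rfloor_m \geq \lfloor z_{i_1}\rfloor_m \geq \lfloor z_{i_2}\rfloor_m$, where the last inequality is monotonicity of the floor applied to $z_{i_2} < z_{i_1}$. Either way condition (iii) survives, and the four pairs together certify that $\xi'$ satisfies all three conditions of Theorem~\ref{mvector}, hence is the $m$-level root vector of a singleton board. The genuine difficulty is isolated in this multiple-of-$m$ subtlety; everything else is a transcription of the argument for Lemma~\ref{checktwice}.
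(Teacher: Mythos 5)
Your proof is correct and follows essentially the same route as the paper's: reduce to the four changed neighbor pairs, verify the two pairs $(i_1,i_1+1)$ and $(i_2-1,i_2)$ directly from the hypotheses via $a\geq b \Rightarrow a+m\geq b$ and $\lfloor a\rfloor_m \geq \lfloor b\rfloor_m$, and handle the pairs $(i_1-1,i_1)$ and $(i_2,i_2+1)$ using the fact that $\xi_m(B)$ already satisfies the conditions of Theorem~\ref{mvector}. If anything, you are more careful than the paper on the pair $(i_1-1,i_1)$: the paper simply asserts that decreasing the right-hand entry cannot break conditions (ii) and (iii), whereas your case split on whether $z_{i_1}$ is a multiple of $m$ (the case in which condition (iii) can newly become non-vacuous) supplies the justification that assertion actually requires.
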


\begin{proof}
Since conditions (ii) and (iii) of Theorem~\ref{mvector} limit how much elements of the $m$-level root vector can grow from one to the next, decreasing the value in position $i_1$ guarantees that if the element initially there was not too much larger than the element in position $i_1-1$, then the new element also cannot be. Similarly, we increase the value in position $i_2$, so if the element in position $i_2+1$ was not too much larger than the original element in position $i_2$, it will continue to not be too much larger. It remains to check that the new element in position $i_1$ is not too small compared to the element in position $i_1+1$ and the new element in position $i_2$ is not too much larger than the element in position $i_2-1$.

Clearly if $a\geq b$ then $a+m \geq b$ and $\lfloor a \rfloor_m \geq \lfloor b \rfloor_m$. Thus if $z_{i_2-1} \geq z_{i_1}$ and $z_{i_2} \geq z_{i_1+1}$ conditions (ii) and (iii) hold for the sets of neighbors we must check, and $\xi'$ fulfills the conditions of Theorem~\ref{mvector} and is therefore the $m$-level root vector of some singleton board.
\end{proof}

\begin{lem}
\label{mdecreasing}
Let $B$ be a singleton board with $n$ columns that has $M$ as the maximum element in its $m$-level root vector. If the first appearance of $M$ in $\xi_m(B)$ is in position $j$, then vertex $B$ is connected by a sequence of edges in $G_m(B)$ to another Ferrers board $B'$ which has an $m$-level root vector $\xi_m(B')$ identical to $\xi_m(B)$ in the first $j$ positions, and weakly decreasing for indices $i \geq j$.
\end{lem}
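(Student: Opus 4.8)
The plan is to mirror the proof of Lemma~\ref{decreasing} almost line for line, substituting the $m$-level tools for their ordinary counterparts: Lemma~\ref{mchecktwice} in place of the ``check two pairs'' lemma, and Theorem~\ref{mrootswap} in place of Theorem~\ref{rootswap}. Writing $\xi_m(B) = \langle z_1, \ldots, z_n \rangle$ with $z_j = M$ the leftmost maximum, I would process the positions $i \geq j$ from left to right, at each stage dragging the locally-largest remaining value as far left as possible by single transpositions, so that after each stage one more position holds a value that dominates (is $\geq$) every entry to its right.

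First I would fix up position $j+1$. If its current occupant already dominates everything to its right, nothing is done; otherwise I let $k$ be the rightmost index with $z_k$ exceeding the occupant of position $j+1$ and swap positions $j+1$ and $k$. To invoke Lemma~\ref{mchecktwice} (with its larger value playing the role of $z_{i_1}$ at index $k$ and its smaller value the role of $z_{i_2}$ at index $j+1$), I must verify the two plain inequalities it requires: that the left neighbour of the position receiving the larger value dominates it, and that the smaller value dominates its new right neighbour. The first holds because the left neighbour is position $j$, whose value $M$ is the global maximum; the second holds because $k$ was chosen rightmost among indices exceeding the occupant, forcing the entry just past $k$ to be no larger (and if $k=n$ there is no right neighbour to check). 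Lemma~\ref{mchecktwice} then certifies that the swapped vector is again the $m$-level root vector of a singleton board, and Theorem~\ref{mrootswap} certifies that the swap is an edge of $G_m(B)$.

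Since the vector is finite and each such swap strictly increases the occupant of position $j+1$, this subprocess terminates with position $j+1$ dominating everything to its right. The crucial observation, exactly as in Lemma~\ref{decreasing}, is that the only property of $z_j$ used above was that it dominates all entries to its right, and the new occupant of position $j+1$ now shares that property. I would therefore induct on the target index: when I advance to position $j+2$, then $j+3$, and so on, its left neighbour already dominates everything to its right, which supplies precisely the first hypothesis of Lemma~\ref{mchecktwice}, while the rightmost-exceeding choice supplies the second. Chaining the edge-paths from each stage shows the terminal board $B'$ is connected to $B$, agrees with $\xi_m(B)$ in the first $j$ positions, and is weakly decreasing from position $j$ onward.

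The single place that needs the $m$-level hypotheses rather than a verbatim copy of Lemma~\ref{decreasing} is the preservation of the singleton condition (iii) of Theorem~\ref{mvector} under a swap, and this is exactly what Lemma~\ref{mchecktwice} was designed to absorb. Because it asks only for the additive-slack-free inequalities $z_{i_2-1} \geq z_{i_1}$ and $z_{i_2} \geq z_{i_1+1}$, and because $a \geq b$ forces both $a+m \geq b$ and $\lfloor a \rfloor_m \geq \lfloor b \rfloor_m$, the $m$-floor condition is discharged in the same stroke as the growth condition. Consequently I anticipate no genuine obstacle beyond bookkeeping; the entire argument reduces to checking, at every transposition, that the larger value lands beneath a dominating left neighbour and the smaller value lands above a dominated right neighbour, both of which the construction guarantees.
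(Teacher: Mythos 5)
Your proposal is correct and takes essentially the same approach as the paper: repeatedly swap the leftmost entry at or beyond position $j$ that fails to dominate everything to its right with the rightmost larger entry, certify each swap via Lemma~\ref{mchecktwice} (the dominating left neighbour and the rightmost-exceeding choice supplying its two inequalities) and Theorem~\ref{mrootswap}, then induct on the position using the fact that each newly settled entry dominates all entries to its right. This matches the paper's proof, which is the same mirroring of Lemma~\ref{decreasing} with the $m$-level tools substituted for their $1$-level counterparts.
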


\begin{proof}
This proof closely mirrors that of Lemma~\ref{decreasing}. Remember the general idea of the proof of Lemma~\ref{decreasing} was to take the leftmost element to the right of position $j$ that was not already bigger than all elements to its right, and swap it by the rightmost element of the root vector that was bigger than it. Let us assume the element in question is in position $i$, and it is getting swapped for a larger element in position $k$. Since $z_i<z_k$, by Lemma~\ref{mchecktwice} if $z_{i-1}\geq z_k$ and $z_{i} \geq z_{k+1}$, we can conclude that the resulting vector is an $m$-level root vector.Since the element in position $i-1$ was bigger than every element to its right, it will be bigger than the new element in position $i$, so $z_{i-1} \geq z_k$. Also, since $z_i$ is swapped for the rightmost element of the $m$-level root vector greater than it, $z_i \geq z_{k+1}$. Thus Lemma~\ref{mchecktwice} guarantees that conditions (ii) and (iii) of Theorem~\ref{mvector} are true for the new vector, so it is the $m$-level root vector of a new board, which shares an edge with the original board $B$.

Therefore we can iterate swaps, as in the proof of Lemma~\ref{decreasing}, until we obtain an $m$-level root vector where all elements including and to the right of $z_j=M$ are in non-increasing. The board we obtain after these sequence of swaps will be connected back to the original board $B$ by a sequence of edges in $G_m(B)$ which correspond to the swaps performed.
\end{proof}

Having demonstrated the $m$-level version of Lemma~\ref{decreasing}, we move on to the $m$-level version of Lemma~\ref{maxleft}

\begin{lem}
\label{mmaxleft}
Let $B$ be a singleton board with $n$ columns, padded with columns of height zero on the left so that $\xi_m(B)=\langle z_1,\ldots,z_n\rangle$ contains only non-negative entries. Let $M$ be the greatest integer such that $v_M \neq 0$. Let $S$ be the set of non-negative multiples of $m$ which are strictly less than $M$. Let $i$ be the least non-negative integer such that either $v_i>1$ or $v_i=1$ and $i$ is not a multiple of $m$. If $v_s \geq 2$ for all $s\in S$ such that $s>i$, then $B$ is connected by a sequence of edges in $G_m(B)$ to another Ferrers board $B'$ which has a root vector $\xi_m(B') = \langle0,m,\ldots,\lfloor M-1\rfloor_m,M,\ldots,z_n\rangle$. That is to say, the first $\frac{\lfloor M-1\rfloor_m}{m}+1$ entries of $\xi(B')$ are the non-negative multiples of $m$ from $0$ to $\lfloor M-1\rfloor_m$ in increasing order and the next element is the first occurrence of value $M$.
\end{lem}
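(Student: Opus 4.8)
The plan is to transport the argument of Lemma~\ref{maxleft} to the $m$-level setting, with the non-negative multiples of $m$ now playing the role that the full set of integers played before, and with Lemma~\ref{mdecreasing} and Lemma~\ref{mchecktwice} standing in for Lemma~\ref{decreasing} and Lemma~\ref{checktwice}. Let $j$ be the position of the leftmost occurrence of $M$. First I would apply Lemma~\ref{mdecreasing} to assume, without loss of generality, that $\xi_m(B)$ is weakly decreasing on the indices $i \geq j$. Because condition (ii) of Theorem~\ref{mvector} lets an entry exceed its left neighbor by at most $m$, and because one cannot reach the $m$-floor level of $M$ without having already exhibited each smaller multiple of $m$, every multiple of $m$ from $0$ up to $\lfloor M-1\rfloor_m$ must already occur at least once among the entries to the left of position $j$. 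Moreover, by the definition of $i$ every value below $i$ is a multiple of $m$ occurring exactly once, so those multiples are already locked into an increasing run at the very front; the hypothesis that $v_s \geq 2$ for every $s \in S$ with $s>i$ then provides the reserve of spare copies of each higher multiple that the construction will consume.

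Next, exactly as in Lemma~\ref{maxleft}, I would let $o$ be the largest multiple of $m$ occurring at least twice strictly to the left of position $j$, with leftmost occurrence at position $k$, and grow the run $o, o+m, o+2m, \ldots, M$ one rung at a time: repeatedly swap the entry just to the right of the current run with the rightmost, hence smallest, entry $z_t$ with $t \geq j$ that exceeds it. Each such swap increases an entry sitting in the prefix while decreasing an entry in the already-sorted tail. For every swap in which the incoming value does not exceed its new left neighbor, Lemma~\ref{mchecktwice} applies verbatim: the left neighbor of the raised position dominates the incoming value and the outgoing value dominates the right neighbor of the lowered position, which furnishes the two $\geq$ inequalities the lemma demands, and Theorem~\ref{mrootswap} makes the swap an edge of $G_m(B)$.

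Iterating just as in Lemma~\ref{maxleft}, each completed round installs the increasing run of multiples terminating in $M$ and strictly lowers the largest multiple of $m$ still occurring twice to the left of the first $M$. Since the vector is finite, after finitely many rounds no multiple of $m$ occurs twice to the left of $M$, which forces the opening entries to be $0, m, \ldots, \lfloor M-1\rfloor_m$ immediately followed by $M$, giving the claimed prefix of length $\tfrac{\lfloor M-1\rfloor_m}{m}+1$. Because every swap is an edge, the resulting board $B'$ is joined to $B$ by a path in $G_m(B)$.

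The hard part, and the genuine departure from the $m=1$ argument, is condition (iii) of Theorem~\ref{mvector}, which has no counterpart before and which is precisely why Lemma~\ref{mchecktwice} is stated with bare $\geq$ inequalities rather than with the $+m$ slack that condition (ii) alone would allow. The one step Lemma~\ref{mchecktwice} does \emph{not} cover is the boundary step in which the raised entry lands on the next multiple $o+m$ and so exceeds its left neighbor $o$ by exactly $m$; here I would check admissibility by hand, observing that $o$ and $o+m$ are both multiples of $m$, so condition (iii) is vacuous for this pair while condition (ii) holds with equality. The remaining delicate bookkeeping is to confirm that the non-multiple entries, all of which sit at value $\geq i$ by the definition of $i$, are never dragged into the prefix and cannot obstruct the staircase of multiples; this is controlled by condition (iii), which keeps the $m$-floors of non-multiples from rising, together with the hypothesis that restricts the ``occurs at least twice'' requirement exactly to the multiples of $m$ lying above $i$.
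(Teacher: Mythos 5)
Your first phase---transporting the Lemma~\ref{maxleft} algorithm to the multiples of $m$, justifying each swap with Lemma~\ref{mchecktwice}, and hand-checking the boundary step where the incoming value is $o+m$ over a left neighbor $o$---matches the first part of the paper's argument. But there is a genuine gap at the end: from ``no multiple of $m$ occurs twice to the left of the first $M$'' you conclude that the opening entries are forced to be $0,m,\ldots,\lfloor M-1\rfloor_m,M$. That implication is false, because entries that are \emph{not} multiples of $m$ can sit to the left of the first occurrence of $M$ from the very start, and your procedure contains no step that ever moves them out. Concretely, take $m=2$ and $\xi_2(B)=\langle 0,2,3,4,4,3,2\rangle$, a valid singleton root vector by Theorem~\ref{mvector} (it is the board $(0,0,1,2,4,7,10)$); here $M=4$, $i=2$, and the hypothesis holds vacuously since $S=\{0,2\}$ has no element exceeding $i$. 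No multiple of $2$ occurs twice left of the first $4$, and the tail is already weakly decreasing, so your algorithm performs no swaps at all and halts with prefix $0,2,3,4$ rather than the required $0,2,4$. Your closing remark frames the non-multiples as bookkeeping, to be shown ``never dragged into the prefix''; the problem is the reverse: they may already be in the prefix and must be actively dragged out.

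That removal is exactly the second half of the paper's proof, and it is not a routine addendum. After the multiples-of-$m$ phase and an application of Lemma~\ref{mdecreasing}, each non-multiple $v$ remaining in the prefix lies between the now-unique prefix occurrences of $\lfloor v\rfloor_m$ and $\lceil v\rceil_m$. The paper takes the leftmost such $v$, at position $i$ say, and swaps it with the rightmost occurrence of $\lceil v\rceil_m$ in the weakly decreasing tail (or with $M$ itself if $\lceil v\rceil_m \geq M$); the hypothesis $v_s\geq 2$ for multiples $s>i$ is precisely what guarantees this tail copy exists, since only one copy of each such multiple remains in the prefix. It then marches rightward, swapping the entry at position $i+1$ with a tail copy of $\lceil v\rceil_m+m$, and so on up to $M$, rebuilding the prefix out of pure multiples and expelling every non-multiple into the tail. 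In my example this is the single swap of the $3$ with the second $4$, reaching $\langle 0,2,4,4,3,3,2\rangle$. A secondary, smaller issue: your swaps ``grow the run $o,o+m,\ldots$'' only if the smallest tail entry exceeding the current one happens to be the next multiple, which can fail when the tail contains non-multiples; that is repairable, but the missing expulsion phase is the decisive gap.
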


\begin{proof}
Apply the algorithm from the proof of Lemma~\ref{mmaxleft} to the elements of $\xi_m(B)$ which are multiples of $m$ and occur twice to the left of the first occurrence of value $M$. This yields a board where each multiple of $m$ occurs only once to the left of the leftmost entry $M$. By Lemma~\ref{mdecreasing} we can swap elements until the elements to the right of the first occurrence of $M$ are in weakly decreasing order. However, there may be entries that are not multiples of $m$ interspersed with the increasing sequence of multiples of $m$ on the left. Since each multiple of $m$ occurs only once on that side of $M$, each instance of a non-multiple of $m$ must happen between the unique occurrence of the greatest multiple of $m$ lower than it, its $m$-floor, and the least multiple of $m$ greater than it, its $m$-ceiling.

Let $i$ be the index of the leftmost entry in the new $m$-level root vector that is not a multiple of $m$ and $v$ be the value of that entry. Swap $v$ with the rightmost occurrence of $\lceil v \rceil_m$ in the weakly decreasing section of the root vector to the right of $M$, or just swap with $M$ if $\lceil v \rceil_m \geq M$. If $\lceil v \rceil_m < M$, an entry with that value will exist to the right of $M$, because any multiple of $m$ greater than $v$ and less than $M$ must occur at least twice in the root vector, and only once to the left of $M$. The resulting vector is a root vector for a singleton board because $v$ was swapped into the weakly decreasing section of the root vector and was swapped with the last element that has an $m$-floor greater then it. Furthermore, $\lfloor v \rfloor_m < z_{i+1} \leq \lceil v \rceil_m$ so the new element in position $i$ which has value $\lceil v \rceil_m$ will be greater than or equal to the value to its right. 

Next swap the current entry in position $i+1$ with $\lceil v \rceil_m+m$, the next multiple of $m$ bigger than $\lceil v \rceil$, or with $M$ if $\lceil v \rceil_m+m \geq M$. This is possible for the same reasons provided in the previous paragraph. Continuing this way, we obtain a root vector which begins $\langle0,m,\ldots,\lfloor M-1\rfloor_m,M,\ldots,\rangle$ as desired.
\end{proof}

As before, this allows us to characterize when the $m$-level rook equivalence graph of a board is connected.
\begin{thm}
\label{mconnected}
Let $B$ be a singleton board with $n$ columns, padded with columns of height zero on the left so that $\xi_m(B)=\langle z_1,\ldots,z_n\rangle$ contains only non-negative entries. Let $M$ be the greatest integer such that $v_M \neq 0$. Let $S$ be the set of non-negative multiples of $m$ which are strictly less than $M$. Let $i$ be the least non-negative integer such that either $v_i>1$ or both $v_i=1$ and $i$ is not a multiple of $m$. In this case $G_m(B)$ is connected if and only if $v_s \geq 2$ for all $s\in S$ such that $s>i$.
\end{thm}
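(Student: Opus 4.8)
The plan is to mirror the proof of Theorem~\ref{connected}, treating the non-negative multiples of $m$ in the role played by all non-negative integers in the $m=1$ case, and to prove the two implications separately. Throughout I use that $m$-level rook equivalent singleton boards have the same multiset of $m$-level root vector entries (immediate from Theorem~\ref{br factor}), and that an edge of $G_m(B)$ is a single transposition of two entries (Theorem~\ref{mrootswap}).

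For the forward implication, suppose $v_s \geq 2$ for every $s \in S$ with $s > i$. This is precisely the hypothesis of Lemma~\ref{mmaxleft}, so I would first apply that lemma to connect $B$ by a path in $G_m(B)$ to a board whose $m$-level root vector begins $\langle 0, m, \ldots, \lfloor M-1\rfloor_m, M, \ldots\rangle$, and then apply Lemma~\ref{mdecreasing} to continue the path to the board whose root vector is this leading spine followed by all remaining entries in weakly decreasing order. The point is that this terminal vector depends only on the multiset of entries of $\xi_m(B)$: the spine uses one copy of each multiple $0, m, \ldots, \lfloor M-1\rfloor_m$ together with $M$, and the tail is the sorted remainder. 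Using Theorem~\ref{mvector} one checks this vector is a genuine $m$-level root vector, since the spine steps by exactly $m$ between consecutive multiples and a weakly decreasing run automatically satisfies conditions (ii) and (iii). Because the hypothesis depends only on the shared root multiset, every vertex of $G_m(B)$ is joined by a path to this common canonical board, so $G_m(B)$ is connected.

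For the reverse implication I would argue the contrapositive. If the condition fails there is a multiple $s = c_0 m$ with $i < s < M$, and, since a positive multiple cannot be skipped over, $v_s = 1$; choose $s$ minimal. By definition of $i$, the entry witnessing $i$ is either a value occurring at least twice or a single non-multiple, and in either case it provides a value $u < s$ that can be inserted into the leading spine at its natural location (as a repeated multiple, or between its $m$-floor and its $m$-ceiling). I would then exhibit two rearrangements of $\xi_m(B)$: the canonical vector $\xi^{(1)} = \langle 0, m, \ldots, \lfloor M-1\rfloor_m, M, \ldots\rangle$, in which the unique $s$ sits at position $c_0+1$, and a second valid vector $\xi^{(2)}$ obtained by inserting $u$ into the spine just before $s$, which pushes $s$ to position $c_0+2$; both satisfy Theorem~\ref{mvector}. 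It remains to show these lie in different components. The engine is the structural fact that in every valid $m$-level root vector the leftmost entry of value at least $cm$ is exactly $cm$, which I would prove from conditions (ii) and (iii) of Theorem~\ref{mvector}. This pins the leftmost occurrence of each spine multiple, and in particular the unique $s$: any swap that tried to move $s$ would either place a value greater than $c_0 m$ at the new leftmost block-$c_0$ position, contradicting the structural fact, or violate condition (ii) or (iii) at a disturbed pair of neighbors. Hence the position of $s$ is constant on each component, $\xi^{(1)}$ and $\xi^{(2)}$ are disconnected, and $G_m(B)$ is not connected.

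The main obstacle is exactly this rigidity argument in the reverse direction. When $m=1$ the pinned value is wedged between the exact values $s-1$ and $s+1$, so its immovability is a one-line check; when $m>1$ the block of width $m$ around $s$ may also contain the non-multiples $s+1,\ldots,s+m-1$, so $s$ need not be glued to the next spine multiple, and one must rule out the possibility of shuffling $s$ rightwards into the weakly decreasing tail. Ruling this out cleanly requires combining the no-skipping structural fact with condition (iii), which forbids moving a block-$(c_0-1)$ value immediately to the left of a higher non-multiple; verifying that these constraints jointly freeze $s$ in every valid vector reachable from $\xi^{(1)}$ is the step I expect to demand the most care.
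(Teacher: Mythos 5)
Your proposal is correct and follows essentially the same route as the paper: the forward direction is the same composition of Lemma~\ref{mmaxleft} and Lemma~\ref{mdecreasing} landing at the canonical vector determined by the root multiset, and the reverse direction constructs the same two vectors (the canonical spine, and the spine with the witness $u$ from $i$ inserted before the unique multiple $s$) and pins $s$ with the same rigidity fact, since your ``leftmost entry of value at least $cm$ is exactly $cm$'' is equivalent to the paper's observation that one cannot pass from a value below $s$ to a value above $s$ without an intervening entry equal to $s$, forcing the unique $s$ to sit immediately left of the leftmost entry exceeding it. The step you flag as delicate---ruling out shuffling $s$ rightward---is resolved exactly as you suggest: any swap involving $s$ leaves the leftmost entry exceeding $s$ preceded by a value strictly below $s$, violating condition (ii) or (iii) of Theorem~\ref{mvector}.
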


\begin{proof}
Lemma~\ref{mmaxleft} demonstrates that vertex $B$ is connected by a sequence of edges to a vertex corresponding to a singleton board that has an $m$-level rook polynomial of the form $ \langle0,m,\ldots,\lfloor M-1\rfloor_m,M,\ldots,z_n\rangle$. Applying Lemma~\ref{mdecreasing} to this second board yields a board which increases by multiples of $m$ until $\lfloor M-1\rfloor_m$, then is weakly decreasing starting with the value of $M$ in the next position. Such a root vector must be unique in a given $m$-level rook equivalence class. In fact, in~\cite{blrs:mrp} it is shown that this $m$-level root vector corresponds to a unique board in the $m$-level equivalence class where each non-empty column is at least $m$ squares shorter than the column to its right.

To complete the proof, we need to show that if some multiple of $m$, which we will call $v\neq M$, only occurs once in the root vector, but there is a multiple of $m$ less than it that occurs twice, or any non-multiple of $m$ less than it that occurs once, then $G_m(B)$ cannot be connected. The proof begins by showing that, $v$ can never be swapped with another value. Since $v\neq M$, there is at least one element of $\xi_m(B)$ which is bigger than $v$. Because of property (iii) of $m$-level root vectors, $\lfloor \xi_i \rfloor_m < \lfloor \xi_{i+1}\rfloor_m$ implies that $\xi_{i+1}$ must be a multiple of $m$. So, the only way to go from a value less than $v$ to one greater than $v$ is to have value $v$ in between, since we are assuming $v$ is a multiple of $m$. Therefore the entry to the right of $v$ must be the the leftmost entry of $\xi_m(B)$ that is bigger than $v$, because there is only one entry in $\xi_m(B)$ with value $v$. Thus $v$ cannot be swapped with another entry of the $m$-level root vector, because it would no longer be directly to the left of the leftmost value in the root vector greater than itself. Nor can the leftmost value in the root vector greater than $v$ ever move, because then it would no longer be directly to the right of the element of value $v$. Note that this does not preclude swapping that element with another element, so long as that element is no greater than $v+m$.

To get two root vectors with $v$ in different positions, consider the unique root vector which increases by multiples of $m$ until it reaches $M$, then is weakly decreasing. In this root vector, $v$ must be in position $\frac{v}{m}+1$. On the other hand, there is is a value less than $v$ which occurs after $M$ in this unique ordering. Take that value, and insert to the left of $v$, so the entries to the left of $v$ are still weakly increasing. Since the values were increasing by exactly $m$ before, they will now increase by at most $m$ from one value to the next, so this new vector is the $m$-level root vector of some singleton board, but value $v$ occurs exactly one position later in this new vector than in the old. Since the position of the element with value $v$ cannot be altered by swapping pairs of entries, the singleton board corresponding to the new vector cannot be connected by a sequence of edges in $G_m(B)$ to the singleton board corresponding to the original vector. Therefore $G_m(B)$ is disconnected.
\end{proof}

\section{Future Projects}
\label{sec:fproj}

This final section puts forward two conjectures related to the work in the previous sections. The first deals with what graphs can arise as rook equivalence graphs of boards and the second is an attempt to use the rook equivalence board to provide explicit, geometrically motivated bijections between rook placements on rook equivalent boards when $G(B)$ is connected.

\begin{conj}
If $K_{a,b}$ is a complete, bipartite graph that is the rook equivalence graph of Ferrers board $B$, then $a=b=1$.
\end{conj}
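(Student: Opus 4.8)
The plan is to reduce the conjecture to a single non-bipartiteness statement and then attack that statement with a parity argument. Since $K_{a,b}$ is connected with $a+b$ vertices and $K_{1,1}=K_2$ is the unique complete bipartite graph on at most two vertices, it suffices to prove that every connected rook equivalence graph on at least three vertices contains an odd cycle. To set this up I would first record a clean combinatorial model of $G(B)$: by Theorem~\ref{rootswap} an edge forces the two root vectors to differ in exactly two (swapped) positions, and conversely two root vectors of the same class differing in exactly two positions are joined by an edge straight from the definition of the edge set (move the squares between the two corresponding columns). Thus, after padding $B$ so that $\xi(B)$ is non-negative, $G(B)$ is exactly the graph whose vertices are the arrangements of the fixed multiset $\{0^{v_0},1^{v_1},\dots,M^{v_M}\}$ satisfying Theorem~\ref{gjw2}, with two arrangements adjacent precisely when they differ in two positions.

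The key tool I would use is a holonomy/parity invariant on this transposition model. Label the equal entries of the multiset to make them distinct; then each edge is a genuine transposition of the labelled sequence, and a closed walk of length $\ell$ returns to the same \emph{unlabelled} arrangement while permuting the labels within each value-class by a product of $\ell$ transpositions, hence by a permutation of sign $(-1)^{\ell}$. Consequently $G(B)$ is bipartite if and only if every closed walk induces an even permutation of labels, so to prove non-bipartiteness it is enough to exhibit a closed sequence of valid swaps that transposes two equal entries (an odd permutation, forcing odd length). The basic building block is a ``rotor'' on three positions carrying values $\{w,w,w+1\}$: cycling them through their three arrangements is a triangle that swaps the two copies of $w$, and this is precisely the mechanism producing $K_3$ inside the classes of Theorem~\ref{completeg} and of the vector $\langle0,0,0,1,1\rangle$ in Theorem~\ref{no33}.

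Next I would combine the class-size formula with connectivity to locate such a rotor. By Theorem~\ref{connected} the indices with $v_i\ge 2$ form a suffix of $\{0,\dots,M-1\}$, and $v_i\ge1$ for all $0\le i\le M$; using $N=a+b=\prod_{i\ge0}\binom{v_i+v_{i+1}-1}{v_{i+1}}\ge3$ one shows, exactly in the style of Theorems~\ref{no22} and~\ref{no33}, that some value $w$ with $v_w\ge2$ sits adjacent to a value $w+1$ that also occurs. In the favourable cases this lets me realize the $\{w,w,w+1\}$ rotor among three actual positions of a valid root vector, giving a triangle and hence non-bipartiteness; the complete graphs of Theorem~\ref{completeg} fall out here as the extreme case, so the only complete bipartite graph surviving is $K_{1,1}$.

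The hard part, and the reason the statement remains a conjecture, is that the triangle need not exist: the class of $\langle0,0,1,1,2\rangle$ in Figure~\ref{bothnot33} is triangle-free yet non-bipartite, carrying its odd cycle as a $5$-cycle. So no uniform ``triangle always appears'' shortcut is available, and for general multiplicity vectors one must manufacture odd cycles of unbounded length. The obstruction is the validity constraint of Theorem~\ref{gjw2} (first entry $\le 0$, consecutive entries rising by at most $1$): every intermediate arrangement in the swap sequence must itself remain a root vector, and these inequalities block the naive routing that would slide one copy of $w$ past the other. Proving that every connected class of size at least three nonetheless admits a valid closed swap-sequence of odd holonomy---equivalently, certifying that the constrained transposition graph is never bipartite above two vertices---is the crux I expect to be genuinely difficult.
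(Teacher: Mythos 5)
The statement you are proving is, in the paper, a \emph{conjecture}: the authors give no proof, only a remark after the statement suggesting exactly the plan you describe (odd cycles appear frequently; two positions sharing a value $v_1$ that can each be exchanged with a value $v_2$ induce a triangle; one should either show every rook equivalence graph on at least three vertices has an odd cycle or find a counterexample). Your proposal is a sharpened and partially rigorous version of that remark rather than a proof. The sound parts: the reduction is valid, since $K_{a,b}$ with $a+b\geq 3$ is a connected bipartite graph, so it suffices to show every connected $G(B)$ on at least three vertices is non-bipartite; the converse of Theorem~\ref{rootswap} does follow directly from the edge definition, so $G(B)$ really is the constrained transposition graph you describe; and the labelling/parity invariant is correct --- each edge swaps two positions holding \emph{distinct} values, a closed walk of length $\ell$ induces a value-class-preserving permutation of labels of sign $(-1)^{\ell}$, so any valid closed swap sequence that transposes two equal entries forces an odd closed walk and hence an odd cycle. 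This invariant is a genuine addition beyond what the paper says.

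The gap, which you candidly flag yourself, is that the crux is never established: you do not show that every connected equivalence class of size at least three admits a valid closed swap sequence of odd holonomy. The rotor construction requires three positions whose values $\{w,w,w+1\}$ can be cycled with \emph{every} intermediate vector satisfying Theorem~\ref{gjw2}, and mere availability of those values in the multiset is not enough --- your own cited example $\langle 0,0,1,1,2\rangle$ (the right graph of Figure~\ref{bothnot33}, triangle-free with odd girth $5$) defeats any uniform rotor argument, and no mechanism is offered for manufacturing the longer odd cycles that must replace it. So the proposal does not prove the statement; it reformulates the open problem (non-bipartiteness of the constrained transposition graph) in an equivalent and arguably more tractable form. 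Since the paper itself leaves this as an open conjecture, that unproven kernel is precisely the conjecture, not a detail one can wave past; if you can close it, you will have resolved the paper's first open problem, not merely reproduced its proof.
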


As we saw in the proofs of Theorems~\ref{no22} and~\ref{no33}, cycles with odd length show up frequently in the rook equivalence graphs. In fact, if value $v_1$ occurs in two different positions of $\xi(B)$ which can both be exchanged with an entry with another value $v_2$, this will induce a cycle of length three in the rook equivalence graph of $B$. It may be possible to provide an argument why any rook equivalence graph on at least three vertices must have an odd length cycle, or produce a $K_{a,b} = G(B)$ to act as a counterexample.

\begin{conj}
If $\{B_1,B_2\}$ is an edge in the rook equivalence graph $G(B)$, then there is an explicit bijection between placements of $k$ rooks on $B_1$ and placements of $k$ rooks on $B_2$.
\end{conj}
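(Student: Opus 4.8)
The plan is to build the bijection directly from the geometry of the edge, rather than appeal to abstract equinumerosity. First I would record exactly what an edge does. By Theorem~\ref{rootswap}, passing from $B_1$ to $B_2$ swaps the two root-vector entries in positions $i_1<i_2$, which at the level of boards means transporting some number $s>0$ of cells from the top of column $i_1$ to the top of column $i_2$, where $s=\lvert\xi(B_1)_{i_1}-\xi(B_1)_{i_2}\rvert$. Thus $B_1$ and $B_2$ agree outside columns $i_1,\dots,i_2$; the board $B_1$ carries $s$ \emph{lost} cells at the top of column $i_1$ that are absent from $B_2$, while $B_2$ carries $s$ \emph{gained} cells at the top of column $i_2$ that are absent from $B_1$. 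Any placement of $k$ rooks on $B_1$ that avoids all $s$ lost cells already lies on $B_2$, so on such placements $\Phi$ should be the identity. The substance of the bijection is the rule rerouting a rook that sits on a lost cell.

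On those placements the natural operation is a \emph{bumping cascade} in the spirit of Foata and Sch\"utzenberger~\cite{fs:rpf} and of its extension in~\cite{blrs:bom}: remove the rook from the lost cell of column $i_1$ and attempt to deposit it on the lowest free gained cell of column $i_2$; if the target row already holds a rook in some taller column, slide that rook upward and continue until every rook again occupies a legal, non-attacking cell. Because a cascade only relocates rooks and never creates or destroys one, $\Phi$ automatically preserves the rook count $k$, which is precisely the property we need, and $\Phi^{-1}$ is the reverse cascade launched from a gained cell of column $i_2$ and terminating on a lost cell of column $i_1$.

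The main obstacle is proving that this cascade is well defined and bijective, and a warning sign is that the difference between $B_1$ and $B_2$, though confined to two columns, cannot be absorbed by any rook-by-rook swap fixing the remaining rooks. Already for $B_1=(1,2,2,4)$ and $B_2=(0,2,3,4)$ the placement $\{(1,1),(4,3)\}$ must evict the rook in column $4$: removing the rook from the lost cell $(1,1)$, aiming it at the gained cell $(3,3)$, and bumping $(4,3)$ up to $(4,4)$ produces the valid placement $\{(3,3),(4,4)\}$ on $B_2$. So the rerouting genuinely reaches columns outside the interval $[i_1,i_2]$, and one cannot hope to certify $\Phi$ by a local two-column count; the intervening and taller outer columns supply exactly the slack the cascade consumes. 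The hard part is to specify the bumping rule so that the cascade always terminates inside $B_2$, lands on precisely the gained cells, and is inverted by the reverse cascade. I would try to pin the rule down by matching it against the factorization of Theorem~\ref{gjw1}: the product $\prod_{i=1}^n(x+b_i-(i-1))$ shows that an edge merely interchanges the two factors attached to columns $i_1$ and $i_2$, and the cascade should be the combinatorial shadow of that interchange.

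Finally, to make the bijection truly induced by the edge, I would check that the per-edge maps compose coherently: the cascades along any path in a connected $G(B)$ from $B_1$ to $B_2$ should reproduce the Foata--Sch\"utzenberger bijection routed through the unique increasing representative of Theorem~\ref{unique}, so that the induced bijection between any two boards in the class is path-independent. Establishing this coherence, together with the well-definedness and invertibility above, is where I expect essentially all of the difficulty to lie.
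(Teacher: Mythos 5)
There is no paper proof to compare against: the statement you are addressing is one of the two open conjectures in the final section of the paper, and the author explicitly remarks that extending the column-moving operation to an explicit bijection on rook placements ``has been unexpectedly difficult.'' So the only question is whether your argument closes the conjecture, and it does not. What you have written is a research plan, not a proof. The bumping cascade is never actually defined: you do not specify to which cell a bumped rook moves (only ``slide that rook upward''), whether bumps are processed in a canonical order, or what happens when the bumped rook itself collides with another rook or with the boundary of the board. The three claims that carry all the content --- that the cascade terminates in a legal placement on $B_2$, that it is well defined, and that the reverse cascade inverts it --- are each deferred rather than established. Your own example shows why these cannot be waved through: a single rook on a lost cell of column $i_1$ can force relocations in columns outside the interval $[i_1,i_2]$, so the map is genuinely non-local, and nothing in the proposal rules out a cascade that fails to terminate inside $B_2$ or that sends two distinct placements to the same image.

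The appeal to Foata and Sch\"utzenberger~\cite{fs:rpf} and to~\cite{blrs:bom} does not fill the gap either. Those bijections arise from transposing well-chosen subboards, which is a different geometric operation; as the paper notes, transposition is hard to read off the root vector, and conversely the column move defining an edge of $G(B)$ is not a subboard transposition, so their constructions do not transfer to your cascade. The closing requirement of path-independence is extra structure beyond what the conjecture asks (a bijection per edge suffices) and introduces yet another unproven coherence claim. In short, you have correctly identified the obstacles --- essentially the same ones the paper cites as the reason the conjecture is open --- but you have resolved none of them, so the conjecture remains open after your proposal.
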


In~\cite{fs:rpf} Foata and Sch\"{u}tzenberger give explicit bijections between any two rook equivalent boards by transposing well chosen subboards of those boards. The same is done for $m$-level rook equivalent singleton boards in~\cite{blrs:bom}. Unfortunately, transposing part of a board is a geometric change that is hard to understand in terms of the root vector. On the other hand, moving cells from one column to another is a much simpler geometric transformation, and it is easy to understand in terms of the root vector. However, it has been unexpectedly difficult to extend moving at most one rook from one column to another to an explicit bijection taking the rook placements on the first board to rook placements on the second board.

This is surprising, given how much geometric structure there is to the situation. If $z_i$ and $z_j$ are the two entries of the root vector that are swapped, $|z_i-z_j|$ squares get moved from the column whose entry gets smaller to the column whose entry increases. Furthermore, if a square moves to the right $k$ columns, it will also move up $k$ rows, equivalently for left and down. This is because, before the exchange, $(i-1)-b_i = z_i$ and after the swap $(j-1) - b'_j = z_i$, so $(i-1)-b_i = (j-1) - b'_j$ and $b'_j-b_i = j-i$. Given all this geometric structure, it seems like it must be possible to construct a geometrically motivated bijection between rook placements on the two boards.

\bibliographystyle{alpha}
\bibliography{kenny}
\label{sec:biblio}

\end{document}